\newtheorem{thm}{Theorem}[section]
\newtheorem{cor}[thm]{Corollary}
\newtheorem{lem}[thm]{Lemma}
\newtheorem{prop}[thm]{Proposition}
\newtheorem{rem}[thm]{Remark}
\newtheorem{Observation}[thm]{Observation}
\numberwithin{equation}{section}
\title{Distance-regular graphs admitting a perfect $1$-code}
\author{Mojtaba Jazaeri}
\address{Department of Mathematics, Shahid Chamran University of Ahvaz, Ahvaz, Iran}
\email{M.Jazaeri@scu.ac.ir, M.Jazaeri@ipm.ir}
\begin{document}

\subjclass[2020]{05C69 \and 05E30}

\keywords{Distance-regular graph; Perfect $1$-code}

\begin{abstract}
In this paper, we study the problem that which of distance-regular graphs admit a perfect $1$-code. Among other results, we characterize distance-regular line graphs which admit a perfect $1$-code. Moreover, we characterize all known distance-regular graphs with small valency at most $4$, the distance-regular graphs with known putative intersection arrays for valency $5$, and all distance-regular graphs with girth $3$ and valency $6$ or $7$ which admit a perfect $1$-code.
\end{abstract}

\maketitle

%% \linenumbers

%% main text
% ----------------------------------------------------------------
\section{Introduction}
It is well known that the classical coding theory studies perfect codes in Hamming graphs and these graphs are distance-transitive. In $1973$, Biggs \cite{Biggs} initiated an investigation of perfect codes in distance-transitive graphs. Since distance-transitive graphs are a family of distance-regular graphs, it is reasonable to study perfect codes in distance-regular graphs. Neumaier \cite{N} introduced the notion of a completely regular code and proved that a perfect code in a distance-regular graph is indeed a completely regular code. We refer to the monograph \cite[Chap.~11]{BCN} for more background on perfect codes in distance-regular graphs. In this paper, we study the problem that which of distance-regular graphs admit a perfect $1$-code. In some literature, an efficient domination set is used instead of a perfect $1$-code (see for example \cite{CMO} and \cite{CMOS}). For an overview on recent progress of this topic, we refer to \cite{CMOS}. We first state some observations and equations on perfect $1$-codes and then we characterize distance-regular line graphs which admit a perfect $1$-code. Furthermore, we state some facts about perfect codes in antipodal distance-regular graphs and give an overview on perfect $1$-codes in distance-regular graphs with small diameter at most $4$. Moreover, we characterize all known distance-regular graphs with small valency at most $4$, the distance-regular graphs with known putative intersection arrays for valency $5$, and all distance-regular graphs with girth $3$ and valency $6$ or $7$ which admit a perfect $1$-code.
\section{Preliminaries}
In this paper, all graphs are undirected and simple, i.e., there are no loops or multiple edges. Moreover, we consider the eigenvalues of the adjacency matrix of a graph. A connected graph $\Gamma$ is called distance-regular with diameter $d$ and intersection array
\begin{equation*}
\{b_{0},b_{1},\ldots,b_{d-1};c_{1},c_{2},\ldots,c_{d}\}
\end{equation*}
whenever for each pair of vertices $x$ and $y$ at distance $i$, where $0 \leq i \leq d$, the number of neighbours of $x$ at distance $i+1$ and $i-1$ from $y$ are constant numbers $b_{i}$ and $c_{i}$, respectively. This implies that a distance-regular graph is regular with valency $b_{0}=k$ and the number of neighbours of $x$ at distance $i$ from $y$ is a constant number $k-b_{i}-c_{i}$ which is denoted by $a_{i}$. A $k$-regular graph with $n$ vertices is called strongly regular with parameters $(n,k,\lambda,\mu)$ whenever the number of common neighbours of two adjacent vertices is $\lambda$ and the number of common neighbours of two non-adjacent vertices is $\mu$. Note that for a strongly regular graph to be of diameter $2$ and thus a distance-regular graph, it needs to be connected and non-complete. Moreover, for a distance-regular graph with diameter $d$, the number of vertices at distance $i$ from an arbitrary given vertex is constant and denoted by $K_{i}$. Furthermore, $$K_{i+1}=\frac{K_{i}b_{i}}{c_{i+1}},$$ where $i=0,1,\ldots,d-1$ and $K_{0}=1$.

Recall that a projective plane of order $q$ is a point-line incidence structure such that each line has $q+1$ points, each point is on $q+1$ lines, and every pair of points is on a unique line. Furthermore, the incidence graph of a projective plane is a bipartite distance-regular graph with diameter three and intersection array $\{q+1,q,q;1,1,q+1\}$. Moreover, the distinct eigenvalues (of the adjacency matrix) of this graph are $\{\pm(q+1),\pm \sqrt{q}\}$.

Let $\Gamma$ be a graph with vertex set $V$. Then any subset $C$ of $V$ is called a code in $\Gamma$. Let $\overline{\Gamma_{t}(c)}$ denote the set of vertices at distance at most $t$ from $c$, where $c \in C$. Then the code $C$ in $\Gamma$ is called perfect $t$-code whenever $\{\overline{\Gamma_{t}(c)} \mid c \in C\}$ is a partition of the vertex set $V$ (cf. \cite{K}). This implies that a code $C$ is perfect $1$-code whenever $C$ is an independent set and every vertex outside $C$ has a unique neighbour in $C$. Furthermore, it is trivial to see that if $C_{1}$ and $C_{2}$ are two perfect $1$-codes in a graph, then $|C_{1}|=|C_{2}|$ since there exists a bijection between $C_{1}$ and $C_{2}$ by the definition of a perfect $1$-code. Moreover, if $C$ is a perfect $1$-code in a $k$-regular graph $\Gamma$, then
\begin{equation} \label{equation}
|C|=\frac{|V|}{k+1},
\end{equation}
because $\{\Gamma_{1}(c)\mid c \in C\}$ is a partition for the vertex set $V$ and each part has size $k+1$.
\begin{rem} \label{completegraph}
Let $\Gamma$ be a regular graph with vertex set $V$. Then $\Gamma$ admits an one-element subset of $V(\Gamma)$ as a perfect $1$-code if and only if $\Gamma$ is a complete graph.
\end{rem}
The following two observations are trivial by the definition of a perfect $1$-code but are useful.
\begin{Observation} \label{Observation1}
Let $C$ be a perfect $1$-code with at least two elements in a connected graph and $x,y \in C$. Then $d(x,y) \geq 3$. Moreover, there exist at least two elements at distance $3$ in $C$. To see this let $x \in C$ and $y$ be a vertex at distance $2$ from $x$ which is indeed outside $C$. Then there exists a unique element $z \in C$ which is adjacent to $y$ and therefore the distance between $x$ and $z$ is $3$.
\end{Observation}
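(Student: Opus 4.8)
The plan is to handle the two assertions separately, the first serving as the engine for the second.

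For the claim that $d(x,y)\geq 3$ for all distinct $x,y\in C$, I would argue by excluding the two small distances. Since a perfect $1$-code is by definition an independent set, no two codewords are adjacent, so $d(x,y)\geq 2$. Suppose for contradiction that $d(x,y)=2$, and let $w$ be a common neighbour of $x$ and $y$. Because $C$ is independent and $w$ is adjacent to the codeword $x$, we have $w\notin C$; but then the defining property of a perfect $1$-code forces $w$ to have a \emph{unique} neighbour in $C$, whereas $w$ is adjacent to both $x$ and $y$, two distinct codewords. This contradiction rules out distance $2$, leaving $d(x,y)\geq 3$.

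For the existence of a pair at distance exactly $3$, I would fix any $x\in C$ and first produce a vertex $y$ with $d(x,y)=2$. Since $|C|\geq 2$, there is a second codeword $x'$, and by the first part $d(x,x')\geq 3$; as the graph is connected, a shortest path from $x$ to $x'$ has length at least $3$ and hence passes through a vertex $y$ at distance exactly $2$ from $x$. Note $y\notin C$, for otherwise the first part would give $d(x,y)\geq 3$, contradicting $d(x,y)=2$. Being outside $C$, the vertex $y$ has a unique neighbour $z\in C$, and $z\neq x$ since $x$ is not adjacent to $y$. Finally the triangle inequality gives $d(x,z)\le d(x,y)+d(y,z)=2+1=3$, while the first part gives $d(x,z)\ge 3$; hence $d(x,z)=3$, exhibiting the required pair.

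I expect the only delicate point to be the clean production of the distance-$2$ vertex $y$: one must invoke connectivity together with the first part to guarantee that some vertex sits at distance exactly $2$ from $x$ and, crucially, that it lies outside $C$. Everything else reduces to the independence and unique-neighbour conditions in the definition of a perfect $1$-code, combined with the triangle inequality, so no genuine calculation is required.
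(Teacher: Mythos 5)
Your proof is correct and follows essentially the same route as the paper: the paper's inline argument likewise takes a vertex $y$ at distance $2$ from $x$, notes it lies outside $C$, and uses its unique neighbour $z\in C$ to produce the distance-$3$ pair. You merely make explicit what the paper leaves implicit --- the independence/uniqueness argument for $d(x,y)\geq 3$, the existence of the distance-$2$ vertex via a shortest path to a second codeword, and the triangle-inequality step pinning down $d(x,z)=3$.
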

\begin{Observation} \label{Observation2}
Let $C$ be a perfect $1$-code in a connected regular graph with vertex set $V$ and valency $k$. Then $\{C,V \backslash C\}$ is an equitable partition with the quotient matrix $$\begin{bmatrix}0&k\\1&k-1\end{bmatrix}.$$ Therefore $-1$ must be an eigenvalue of (the adjacency matrix of) this graph.
\end{Observation}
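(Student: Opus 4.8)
The plan is to verify the quotient matrix entry by entry and then read off its spectrum. First I would check that $\{C,V\backslash C\}$ is equitable by counting, for a vertex in each part, how many of its $k$ neighbours fall into each part. For $x\in C$: since $C$ is an independent set (part of the definition of a perfect $1$-code), $x$ has no neighbour in $C$, so all $k$ of its neighbours lie in $V\backslash C$; this yields the top row $(0,k)$. For $y\in V\backslash C$: by the defining property of a perfect $1$-code, $y$ has exactly one neighbour in $C$, and by $k$-regularity its remaining $k-1$ neighbours lie in $V\backslash C$; this yields the bottom row $(1,k-1)$. As these counts do not depend on the choice of vertex within each part, the partition is equitable with the asserted quotient matrix $B=\begin{bmatrix}0&k\\1&k-1\end{bmatrix}$.

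Next I would compute the eigenvalues of $B$. Its characteristic polynomial factors as $\det(B-\lambda I)=\lambda^{2}-(k-1)\lambda-k=(\lambda-k)(\lambda+1)$, so the eigenvalues of $B$ are precisely $k$ and $-1$.

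Finally I would invoke the standard spectral fact for equitable partitions: every eigenvalue of the quotient matrix of an equitable partition is an eigenvalue of the adjacency matrix of the graph, because the quotient matrix describes the action of the adjacency matrix on the (invariant) subspace of vectors that are constant on each part. Applying this to $B$, both $k$ and $-1$ belong to the spectrum of the adjacency matrix; in particular $-1$ is an eigenvalue, as claimed.

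I do not anticipate a serious obstacle, since the argument is definitional together with a one-line spectral lemma; the only points meriting care are confirming that both parts are nonempty so that $B$ genuinely represents a two-part partition (which holds for any connected $k$-regular graph with $k\geq 1$, since an independent set cannot cover all vertices unless $k=0$, and a code must be nonempty to cover $V$), and citing the embedding of the quotient spectrum into the full spectrum rather than only the weaker eigenvalue interlacing.
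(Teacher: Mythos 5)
Your proof is correct and follows exactly the argument the paper has in mind: the paper states this Observation without proof, deeming it ``trivial by the definition of a perfect $1$-code,'' and your entry-by-entry verification of the quotient matrix (independence of $C$ gives the row $(0,k)$, the unique-neighbour property gives $(1,k-1)$) together with the standard fact that quotient-matrix eigenvalues of an equitable partition are eigenvalues of the adjacency matrix is precisely the intended justification. Your closing remark about nonemptiness of both parts is a sensible extra precaution that the paper implicitly assumes.
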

\subsection{Completely regular codes}
Let $\Gamma$ be a connected regular graph with vertex set $V$ and a code $C$, where $|C|\geq 2$. Then the number
\begin{equation*}
d(C):=\min\{d(x,y)\mid x,y \in C,x\neq y\}
\end{equation*}
is called the minimum distance of $C$. The distance $v \in V$ from $C$ is defined by
\begin{equation*}
d(v,C):=\min\{d(v,w)\mid w \in C\}
\end{equation*}
and the number
\begin{equation*}
t(C):=\max\{d(v,C)\mid v \in V\}
\end{equation*}
is called the covering radius of $C$. Let
\begin{equation*}
C_{\ell}:=\{v \in V\mid d(v,C)=\ell\},
\end{equation*}
where $\ell=0,1,\ldots,t(C)$. Then the code $C$ is called completely regular whenever for all $\ell$, every vertex in $C_{\ell}$ has the same number $c_{\ell}$ of neighbours in $C_{\ell-1}$, the same number $b_{\ell}$ of neighbours in $C_{\ell+1}$ and the same number $a_{\ell}$ of neighbours in $C_{\ell}$.  It is trivial to see that every one-element code is completely regular in a distance-regular graph. This definition was fist introduced by Neumaier \cite{N}. He proved that a code $C$ in a distance-regular graph is a perfect code if and only if it is a completely regular code with $d(C)=2t(C)+1$ (cf. \cite[Thm.~4.3]{N}). It follows that if $C$ is a perfect $1$-code in a distance-regular graph, then the code $C$ is a completely regular code with $d(C)=3$ since $t(C)=1$.
\section{Antipodal distance-regular graphs} \label{secantipodal}
Let $\Gamma$ be an antipodal distance-regular graph with diameter $d\geq 3$. Then the folded graph of $\Gamma$ which is denoted by $\overline{\Gamma}$ is a graph whose vertex set is the fibers and two fibers are adjacent whenever there exists an edge between them in the graph $\Gamma$. Recall that two fibers have the same size and if two are adjacent in $\overline{\Gamma}$, then there exists a perfect matching between them in the graph $\Gamma$. Moreover, each vertex in one fiber is adjacent to at most one vertex in another fiber since each pair of vertices in a fiber is at distance $d\geq 3$. Let $C$ be a perfect code in $\Gamma$. Then $C$ is a disjoint union of some fibers (cf. \cite[last Remark on p.~349]{BCN}). It follows that if $\overline{C}$ is the corresponding code in the folded graph $\overline{\Gamma}$, then $\overline{C}$ is a perfect code in $\overline{\Gamma}$. Therefore we have the following proposition.
\begin{prop} \label{antipodal}
Let $\Gamma$ be an antipodal distance-regular graph with diameter $d\geq 3$. Then the graph $\Gamma$ admits a perfect code $C$ if and only if $C$ is a disjoint union of some fibers such that $\overline{C}$ is a perfect code in the folded graph $\overline{\Gamma}$.
\end{prop}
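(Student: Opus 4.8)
The plan is to prove both directions by carefully tracking how a perfect code interacts with the fiber structure of an antipodal distance-regular graph. The key structural fact I will lean on is stated in the preamble to the proposition: any perfect code $C$ in $\Gamma$ is a disjoint union of fibers (the reference to \cite[last Remark on p.~349]{BCN}), together with the observation that the folding map sends edges of $\Gamma$ to edges of $\overline{\Gamma}$, and that adjacent fibers are joined by a perfect matching. Since the equivalence is essentially a transport of the perfect-code property along the folding projection $\pi\colon \Gamma \to \overline{\Gamma}$, the heart of the argument is verifying that $\pi$ restricts to a genuine bijection on the relevant pieces of the partition.

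**First I would** handle the forward direction. Assume $\Gamma$ admits a perfect code $C$. By the cited remark, $C$ is a disjoint union of fibers, so it descends to a well-defined vertex subset $\overline{C}$ of $\overline{\Gamma}$, namely the set of fibers contained in $C$. I then check the two defining conditions of a perfect $1$-code for $\overline{C}$ in $\overline{\Gamma}$ (using Remark~\ref{completegraph} and the characterization in the preliminaries that a perfect $1$-code is an independent set in which every outside vertex has a unique neighbour in the code). For independence: if two fibers $F_{1}, F_{2} \in \overline{C}$ were adjacent in $\overline{\Gamma}$, there would be an edge of $\Gamma$ between them, contradicting that $C$ is an independent set in $\Gamma$. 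For the unique-neighbour condition: take a fiber $F \notin \overline{C}$; pick any vertex $v \in F$. Since $C$ is perfect in $\Gamma$, $v$ has a unique neighbour $c \in C$, lying in some fiber $F' \in \overline{C}$, so $F$ is adjacent to $F'$ in $\overline{\Gamma}$. I must then argue $F$ is adjacent to \emph{no other} fiber of $\overline{C}$: if $F$ were also adjacent to $F'' \in \overline{C}$ with $F'' \neq F'$, the perfect matching between $F$ and $F''$ would supply a neighbour $c'' \in C$ of $v$ (after possibly replacing $v$ by its matched partner and using that every vertex of $F$ has exactly one neighbour in $C$), again contradicting uniqueness in $\Gamma$. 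This last point is where I expect to be most careful, since I need the matching structure to force the neighbour in $C$ to be unique at the level of \emph{vertices}, not merely fibers.

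**For the converse**, suppose $\overline{C}$ is a perfect code in $\overline{\Gamma}$ and set $C = \pi^{-1}(\overline{C})$, the union of the fibers in $\overline{C}$. Independence of $C$ follows because an edge inside $C$ would project to either a loop (impossible, as distinct vertices of a fiber are at distance $d \geq 3$, not adjacent) or an edge between two fibers of $\overline{C}$, contradicting independence of $\overline{C}$. For the unique-neighbour property, take $v \notin C$ lying in fiber $F \notin \overline{C}$; then $F$ has a unique neighbour $F' \in \overline{C}$, and since adjacent fibers are joined by a \emph{perfect matching}, $v$ has exactly one neighbour in $F'$ and none in any other fiber of $\overline{C}$. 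This matched vertex is the desired unique neighbour of $v$ in $C$.

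**The main obstacle** I anticipate is the uniqueness bookkeeping in the forward direction: I must rule out that a single fiber outside $\overline{C}$ is matched into two distinct code-fibers, and conversely that a single code-fiber sends edges to a non-code-fiber through more than one vertex. Both are controlled by the same two facts recalled in the preamble — namely that each vertex in one fiber is adjacent to at most one vertex in another fiber (because pairs within a fiber sit at distance $d \geq 3$), and that adjacency of fibers upgrades to a perfect matching. Once these are invoked cleanly, the vertex-level uniqueness in $\Gamma$ and the fiber-level uniqueness in $\overline{\Gamma}$ translate into one another, and the equivalence follows.
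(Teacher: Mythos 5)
Your argument follows the same route as the paper's own (admittedly terse) justification: the forward direction rests on the cited remark \cite[last Remark on p.~349]{BCN} that a perfect code in an antipodal distance-regular graph is a disjoint union of fibers, and both directions are then a transport of the code along the folding projection, using that adjacent fibers are joined by a perfect matching and that no vertex has two neighbours in the same fiber. In fact you give more detail than the paper, which simply asserts that the folded code ``is a perfect code'' and never spells out the converse; your verification of independence and of the unique-neighbour condition in both directions is correct as far as it goes.

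The gap is one of scope. The proposition --- and the BCN remark it invokes --- is stated for perfect codes in general, i.e., perfect $t$-codes for arbitrary $t$, whereas your entire proof uses the characterization of a perfect $1$-code as an independent set in which every outside vertex has exactly one code neighbour. That characterization has no analogue for $t \geq 2$: there one must work with the defining ball-partition condition, and the natural tool is that the folding projection is a covering map (each vertex of $\Gamma$ has exactly one neighbour in every fiber adjacent to its own, and no two of its neighbours share a fiber, so the projection is a local isomorphism), which allows one to lift and project paths of length up to $t$ and compare radius-$t$ balls in $\Gamma$ and $\overline{\Gamma}$. One must also track the interaction of $2t$ with the diameter $d$: distances inside a fiber equal $d$, so disjointness of the lifted balls is only automatic when $2t < d$, a point that simply does not arise in the unique-neighbour bookkeeping. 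Since every later application in the paper (Corollaries \ref{antipodaldrg3}--\ref{DoubleOdd}) uses only the case $t=1$, what you proved suffices for those uses, but it does not establish the proposition as stated.
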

Recall that the folded graph of an antipodal distance-regular graph with diameter $3$ is a complete graph. Therefore we can conclude the following corollary about a perfect $1$-code in such a graph.
\begin{cor} \label{antipodaldrg3}
Let $\Gamma$ be an antipodal distance-regular graph with diameter $3$. Then a code $C$ is $1$-perfect if and only if $C$ is a fiber.
\end{cor}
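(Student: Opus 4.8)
The plan is to deduce the corollary directly from Proposition~\ref{antipodal} together with the fact, recalled just above its statement, that the folded graph $\overline{\Gamma}$ of a diameter-$3$ antipodal distance-regular graph is complete. Throughout I write $F$ for a fiber and use the basic structural facts noted earlier in this section: distinct vertices of a fiber are mutually at distance $3$, any two fibers adjacent in $\overline{\Gamma}$ are joined by a perfect matching in $\Gamma$, and a vertex of one fiber has at most one neighbour in any other fiber.

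For the forward direction, suppose $C$ is a perfect $1$-code in $\Gamma$. Since a perfect $1$-code is in particular a perfect code, Proposition~\ref{antipodal} forces $C$ to be a disjoint union of fibers. A perfect $1$-code is an independent set, so it suffices to rule out $C$ containing two distinct fibers: if $F_{1},F_{2}\subseteq C$ were distinct fibers, then—because $\overline{\Gamma}$ is complete—$F_{1}$ and $F_{2}$ are adjacent in $\overline{\Gamma}$, hence there is an edge of $\Gamma$ between them, contradicting the independence of $C$. As $C$ is nonempty, it consists of exactly one fiber.

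For the converse, suppose $C=F$ is a fiber. It is independent since its vertices are pairwise at distance $3$. Let $v\notin F$ and let $F'$ be the fiber containing $v$. Because $\overline{\Gamma}$ is complete, $F$ and $F'$ are adjacent in $\overline{\Gamma}$, so there is a perfect matching between them in $\Gamma$; thus $v$ has at least one neighbour in $F$, and by the at-most-one-neighbour property it has exactly one. Hence every vertex outside $F$ has a unique neighbour in $F$, and $F$ is a perfect $1$-code.

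I expect no serious obstacle: the geometry of antipodal covers has already been packaged into Proposition~\ref{antipodal} and the remarks preceding the corollary, so the argument is essentially bookkeeping. The only point requiring a little care is the converse, where one must combine the existence of a perfect matching (which yields at least one neighbour) with the at-most-one-neighbour property (which yields uniqueness) to conclude that each outside vertex has exactly one neighbour in the fiber. Equivalently, one could phrase the whole argument through the folded graph and invoke Remark~\ref{completegraph}, observing that a single fiber corresponds precisely to the unique one-element perfect $1$-code of the complete graph $\overline{\Gamma}$.
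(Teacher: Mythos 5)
Your proof is correct and follows essentially the same route as the paper, which deduces the corollary from Proposition~\ref{antipodal} together with the fact that the folded graph of a diameter-$3$ antipodal distance-regular graph is complete (so its only perfect $1$-codes are singletons, i.e.\ single fibers). Your write-up merely makes explicit the bookkeeping (independence in the forward direction; matching plus at-most-one-neighbour in the converse) that the paper leaves to the reader.
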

Furthermore, the folded graph of an antipodal distance-regular graph with diameter $d=4$ or $5$ is a strongly regular graph and therefore we can conclude the following corollary about a perfect $1$-code in these graphs since there is no perfect $1$-code in a strongly regular graph.
\begin{cor} \label{antipodaldrg45}
There is no perfect $1$-code in an antipodal distance-regular graph with diameter $d=4$ or $5$.
\end{cor}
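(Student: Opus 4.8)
The plan is to argue by contradiction, lifting a hypothetical perfect $1$-code up to the folded graph and then invoking the absence of perfect $1$-codes in strongly regular graphs. So suppose $\Gamma$ is an antipodal distance-regular graph of diameter $d\in\{4,5\}$ that admits a perfect $1$-code $C$. Since the radius-$1$ balls around the codewords of $C$ partition $V(\Gamma)$ into parts of size $k+1\geq 2$, we have $C\neq V(\Gamma)$. By Proposition \ref{antipodal}, $C$ is a disjoint union of fibers and the corresponding code $\overline{C}$ is a perfect code in the folded graph $\overline{\Gamma}$, which (as recalled just above) is a strongly regular graph.

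The key step is to pin down the covering radius $t(\overline{C})$ of $\overline{C}$ in $\overline{\Gamma}$. First I would observe that folding is distance-non-increasing: a shortest path in $\Gamma$ from a vertex $x$ to a codeword $c\in C$ projects to a walk of the same length in $\overline{\Gamma}$ from the fiber $\overline{x}$ to the fiber $\overline{c}\in\overline{C}$, so the distance from $\overline{x}$ to $\overline{C}$ in $\overline{\Gamma}$ is at most $d(x,C)$. Since $C$ is a perfect $1$-code, every vertex of $\Gamma$ lies within distance $1$ of $C$; hence every fiber lies within distance $1$ of $\overline{C}$, giving $t(\overline{C})\leq 1$. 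On the other hand, $C\neq V(\Gamma)$ forces $\overline{C}\neq V(\overline{\Gamma})$, so at least one fiber is at distance $\geq 1$ from $\overline{C}$, and therefore $t(\overline{C})=1$.

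It then remains only to conclude. Because $\overline{C}$ is a perfect code in the distance-regular graph $\overline{\Gamma}$, the characterization recalled in Section~2 (a perfect code is a perfect $t(C)$-code, with $d(C)=2t(C)+1$) identifies the defining radius with the covering radius $t(\overline{C})$; having shown $t(\overline{C})=1$, the balls of radius $1$ around $\overline{C}$ partition $V(\overline{\Gamma})$, i.e., $\overline{C}$ is a perfect $1$-code in $\overline{\Gamma}$. This contradicts the fact that a strongly regular graph admits no perfect $1$-code, so no such $C$ can exist, which is the claim.

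I expect the only real obstacle to be the middle step, namely verifying that the covering radius of $\overline{C}$ is exactly $1$. The subtlety is that a single outside fiber $\overline{y}\notin\overline{C}$ may be joined to several codeword-fibers through different representatives of $\overline{y}$, so a naive attempt to prove directly that each outside fiber has a \emph{unique} neighbour in $\overline{C}$ can break down. Routing the argument through the covering radius together with the perfect-code characterization (so that a perfect code of covering radius $1$ is automatically a perfect $1$-code) sidesteps this uniqueness issue cleanly.
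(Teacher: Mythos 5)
Your proof is correct and follows essentially the same route as the paper: fold $\Gamma$, use Proposition \ref{antipodal} to pass to the strongly regular folded graph $\overline{\Gamma}$, and invoke the nonexistence of perfect $1$-codes in strongly regular graphs. The only difference is that you carefully justify, via the covering radius together with Neumaier's characterization, that the projected code $\overline{C}$ really is a perfect $1$-code -- a step the paper leaves implicit -- so your write-up is a more rigorous rendering of the same argument.
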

We note that the folded graph of the Doubled odd graph $\operatorname{DO_{n}}$ with diameter $2n-1$ is the Odd graph $\operatorname{O_{n}}$ with diameter $n-1$ and therefore we can conclude the following corollary.
\begin{cor} \label{DoubleOdd}
The Doubled odd graph $\operatorname{DO_{n}}$ admits a perfect $1$-code if and only if the Odd graph $\operatorname{O_{n}}$ admits a perfect $1$-code.
\end{cor}
\section{Distance-regular line graphs}
In this section, we characterize distance-regular line graphs which admit a perfect $1$-code. We denote the line graph of a graph $\Gamma$ by $\operatorname{L(\Gamma)}$. The main theorem of this section is as follows.
\begin{thm}
Let $\Gamma$ be a distance-regular graph with least eigenvalue $-2$. Then $\Gamma$ admits a perfect $1$-code if and only if $\Gamma$ is isomorphic to one of the following graphs.
\begin{itemize}
  \item The cycle graph $C_{6n}$,
  \item the line graph of the Petersen graph,
  \item the line graph of the Tutte-Coxeter graph.
\end{itemize}
\end{thm}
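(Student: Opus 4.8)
The plan is to combine the classification of distance-regular graphs with smallest eigenvalue $-2$ with the two elementary necessary conditions already recorded: that $-1$ must be an eigenvalue (Observation \ref{Observation2}) and that $(k+1)\mid |V|$ (equation (\ref{equation})). Recall that such a graph is either a line graph $L(\Sigma)$, a cocktail party graph, or one of finitely many exceptional graphs, and that the cocktail party graphs and the exceptional graphs are all strongly regular. The key reformulation that makes the line-graph cases tractable is this: a perfect $1$-code in $L(\Sigma)$ is exactly an \emph{efficient edge dominating set} of $\Sigma$, i.e.\ a set $M$ of edges of $\Sigma$ that forms an induced matching and meets every edge of $\Sigma$ in exactly one of its members. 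First I would establish this dictionary, then run through the possibilities for $\Sigma$.

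The strongly regular cases are disposed of at once. A strongly regular graph is connected, non-complete and of diameter $2$, so by Observation \ref{Observation1} a code with at least two elements would force two code vertices at distance $3$, which is impossible, while a one-element code forces completeness by Remark \ref{completegraph}. Hence the cocktail party graphs, the exceptional graphs, and the diameter-$2$ line graphs $T(n)=L(K_n)$ and $H(2,n)=L(K_{n,n})$ admit no perfect $1$-code. For the cycles $C_m$ (which have smallest eigenvalue $-2$ precisely when $m$ is even, and satisfy $C_m=L(C_m)$), a perfect $1$-code partitions the vertices into blocks of size $3$, so it exists iff $3\mid m$; together with $m$ even this yields exactly $C_{6n}$.

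There remain the line graphs of diameter at least $3$: $L(\Sigma)$ with $\Sigma$ a Moore graph (the Petersen graph, the Hoffman--Singleton graph, or the putative graph of valency $57$) or the incidence graph of a generalized polygon of order $(s,s)$ (a projective plane, a generalized quadrangle, or a generalized hexagon; thick generalized octagons of order $(s,s)$ do not exist). Here I would use that for $r$-regular $\Sigma$ the graph $L(\Sigma)$ has eigenvalue $-1$ iff $\Sigma$ has eigenvalue $-(r-1)$, since the non-$(-2)$ eigenvalues of $L(\Sigma)$ are $\theta+r-2$. For the incidence graph of a generalized $n$-gon of order $(s,s)$ one has $r=s+1$, and reading off its spectrum ($\pm(s+1)$ together with $\pm\sqrt{s}$ for $n=3$, $\pm\sqrt{2s},0$ for $n=4$, and $\pm\sqrt{3s},\pm\sqrt{s},0$ for $n=6$) shows that $-(r-1)=-s$ occurs only for $GQ(2,2)$ (the Tutte--Coxeter graph) and for $GH(3,3)$. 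The projective-plane incidence graphs, all larger quadrangles and hexagons, and both the Hoffman--Singleton and valency-$57$ Moore graphs all fail Observation \ref{Observation2} (the Hoffman--Singleton line graph also fails divisibility). For the line graph of the Petersen graph ($15$ vertices, $k=4$) and for $L(GQ(2,2))$ ($45$ vertices, $k=4$) both conditions hold, and I would finish these by exhibiting the corresponding efficient edge dominating sets explicitly, of sizes $3$ and $9$ respectively, in the Petersen graph and in the incidence graph of $GQ(2,2)$.

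The main obstacle is the surviving case $L(GH(3,3))$. It has $-1$ in its spectrum and satisfies $7\mid 1456$, so neither Observation \ref{Observation2} nor equation (\ref{equation}) excludes it; worse, the parameters of a hypothetical efficient edge dominating set in $GH(3,3)$ (a set $M$ with $|M|=208$, each covered point on a unique covered line, each non-covered point on all of its $4$ lines covered) are completely consistent at the level of flag and line-pair counts. Ruling it out therefore demands a genuinely finer, geometry-specific argument that the generalized hexagon $GH(3,3)$ carries no dominating induced matching; I expect this to be the delicate step, to be carried out either by exploiting the structure of $GH(3,3)$ (its sub-quadrangles and girth-$12$ incidence geometry) or by a finite computation. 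Once $L(GH(3,3))$ is eliminated, the graphs $C_{6n}$, the line graph of the Petersen graph, and the line graph of the Tutte--Coxeter graph are exactly those that remain, which completes the characterization.
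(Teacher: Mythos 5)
Your skeleton is exactly the paper's: use the classification of least-eigenvalue $-2$ distance-regular graphs (Theorem \ref{thm:drg-2}), kill the diameter-$2$ cases with Observation \ref{Observation1} and Remark \ref{completegraph}, get $C_{6n}$ from Proposition \ref{Cycle} together with evenness, and use the eigenvalue transfer ($-1$ in the spectrum of $L(\Sigma)$ forces $-(k-1)$ in the spectrum of $\Sigma$, Proposition \ref{bipartite}) to cut the generalized polygon cases down to $\operatorname{GQ}(2,2)$ and $\operatorname{GH}(3,3)$, and your ``efficient edge dominating set'' dictionary is the paper's reformulation of a code in $L(\Sigma)$ as a set of edges of $\Sigma$ with mutually disjoint closed edge neighbourhoods covering all vertices. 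However, the proposal does not prove the theorem, because the steps you defer are where essentially all the content lies. The minor deferral is existence for $L(\mathrm{Petersen})$ and $L(\mathrm{Tutte\mbox{-}Coxeter})$: for the first, the paper needs no explicit matching, since $L(\mathrm{Petersen})$ is antipodal of diameter $3$ and every fiber is a perfect $1$-code (Corollary \ref{antipodaldrg3}); for the second, it exhibits nine flags of the Doily, no two sharing a point or a line (Figure \ref{Fig}). These are easy to supply.

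The major deferral is fatal as the proposal stands: you explicitly leave open the elimination of $L(\operatorname{GH}(3,3))$, remarking only that it ``demands a genuinely finer, geometry-specific argument'' or ``a finite computation.'' That elimination is the heart of the paper and occupies most of its length. The paper works with the split Cayley hexagon of order $3$ (the only known $\operatorname{GH}(3,3)$), proves Lemma \ref{GH(1,q)} --- the incidence graph of the double of a projective plane of order $q$ carries at most $q^{2}+q+1$ edges with pairwise disjoint closed edge neighbourhoods --- and then runs a delicate counting argument on the distance partition from a point outside the hypothetical code, split into cases according to $i=|A\cap\Gamma_{2}^{in}(1)|$ where $A$ spans a sub-hexagon of order $(1,3)$ through that point; the cases $i\geq 1$ and the subcases of $i=0$ are each driven to a contradiction against Lemma \ref{GH(1,q)} or against {\sf GAP}-verified structural counts. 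You correctly diagnosed that no spectral or divisibility condition can do this job (the paper underlines the point in Section \ref{GH(2,2)}: the two cospectral $\operatorname{GH}(2,2)$ point graphs differ in whether they admit a perfect $1$-code), but diagnosing the obstruction is not the same as overcoming it. Until that argument is supplied --- and, strictly, until one also addresses that only the \emph{known} $\operatorname{GH}(3,3)$ is being excluded, a caveat the paper itself carries --- the ``only if'' direction of the theorem remains unproved.
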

We first observe that if $\Gamma$ is a $k$-regular graph with $n$ vertices and the line graph $\operatorname{L(\Gamma)}$ admits a perfect $1$-code $C$, then by Equation \ref{equation}, we have
\begin{equation*}
|C|=\frac{\frac{nk}{2}}{2k-1}.
\end{equation*}
Furthermore, every vertex in $C$ is related to an edge in the graph $\Gamma$ and therefore the perfect $1$-code $C$ can be considered as a $1$-regular induced subgraph, say $\overline{C}$, which is also a vertex cover of the graph $\Gamma$. In other words, the line graph $\operatorname{L(\Gamma)}$ contains $|\overline{C}|$ edges with mutually disjoint closed edge neighborhoods. Recall that the closed edge neighborhood of an edge $e$ consists
of the neighborhood of $e$ together with the edge $e$ itself.  Moreover, $-(k-1)$ must be an eigenvalue of the graph $\Gamma$ because $-1$ must be an eigenvalue of the line graph $\operatorname{L(\Gamma)}$ (cf. Obs. \ref{Observation2}).
Therefore we can conclude the following proposition.
\begin{prop} \label{bipartite}
Let $\Gamma$ be a $k$-regular graph such that its line graph $\operatorname{L(\Gamma)}$ admits a perfect $1$-code. Then $-(k-1)$ is an eigenvalue of the graph $\Gamma$. Moreover, if the graph $\Gamma$ is bipartite, then $\pm k$ and $\pm(k-1)$ must be the eigenvalues of the graph $\Gamma$.
\end{prop}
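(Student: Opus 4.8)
The plan is to combine the standard relationship between the adjacency spectrum of a regular graph and that of its line graph with the spectral consequence of admitting a perfect $1$-code already recorded in Observation \ref{Observation2}. First I would record that $\operatorname{L}(\Gamma)$ is a $(2k-2)$-regular graph, and that the hypothesis forces $-1$ into its spectrum: by the same equitable-partition argument as in Observation \ref{Observation2}, applied now to the valency $2k-2$, the partition $\{C,\,V(\operatorname{L}(\Gamma))\setminus C\}$ has quotient matrix $\begin{bmatrix}0&2k-2\\1&2k-3\end{bmatrix}$, whose characteristic polynomial factors as $(x-(2k-2))(x+1)$. Hence $-1$ is an eigenvalue of $\operatorname{L}(\Gamma)$.

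Next I would invoke the classical fact that, for a $k$-regular graph $\Gamma$ with $n$ vertices and $m=\tfrac{nk}{2}$ edges, the adjacency eigenvalues of $\operatorname{L}(\Gamma)$ are exactly the numbers $\theta+k-2$, one for each eigenvalue $\theta$ of $\Gamma$, together with the eigenvalue $-2$ with multiplicity $m-n$. Since $-1\neq -2$, the eigenvalue $-1$ of $\operatorname{L}(\Gamma)$ cannot be accounted for by the $-2$ block and must therefore satisfy $-1=\theta+k-2$ for some eigenvalue $\theta$ of $\Gamma$; solving gives $\theta=-(k-1)$. This establishes the first assertion and matches the observation already made in the paragraph preceding the statement.

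For the ``moreover'' part I would assume $\Gamma$ bipartite and exploit the symmetry of bipartite spectra: writing the adjacency matrix in block form $\begin{bmatrix}0&B\\B^{\top}&0\end{bmatrix}$ and conjugating by $\operatorname{diag}(I,-I)$ shows the spectrum of $\Gamma$ is symmetric about $0$. Being $k$-regular, $\Gamma$ has $k$ as an eigenvalue via the all-ones vector, so by symmetry $-k$ is an eigenvalue as well; and since $-(k-1)$ was just shown to be an eigenvalue, symmetry yields that $k-1$ is too. Thus all four of $\pm k$ and $\pm(k-1)$ lie in the spectrum of $\Gamma$, as claimed.

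The ingredients are routine — the line-graph spectrum formula and the symmetry of bipartite spectra — so no serious obstacle arises. The only point requiring a moment's care is confirming that $-1$ genuinely comes from the $\theta+k-2$ part rather than from the $-2$ multiplicity; this is immediate once one notes $-1\neq -2$, but it is the step on which the clean deduction $\theta=-(k-1)$ rests, and I would flag it explicitly rather than let it pass silently.
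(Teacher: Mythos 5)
Your proof is correct and follows essentially the same route as the paper: the equitable-partition argument of Observation \ref{Observation2} applied to the $(2k-2)$-regular graph $\operatorname{L(\Gamma)}$ forces $-1$ into its spectrum, the line-graph spectrum relation $\theta\mapsto\theta+k-2$ (with the extra $-2$'s excluded since $-1\neq-2$) then yields $-(k-1)$ as an eigenvalue of $\Gamma$, and bipartite spectral symmetry gives the ``moreover'' part. The paper compresses all of this into one sentence preceding the proposition; your write-up simply makes the implicit steps explicit.
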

Distance-regular graphs with least eigenvalue $-2$ have been classified as follows.
\begin{thm}\label{thm:drg-2} \cite[Thm.~3.12.4 and 4.2.16]{BCN} Let $\Gamma$ be a distance-regular graph with least eigenvalue $-2$. Then $\Gamma$ is a cycle of even length, or its diameter $d$ equals $2,3,4,$ or $6$. Moreover,
\begin{itemize}
\item If $d=2$, then $\Gamma$ is a cocktail party graph, a triangular graph, a lattice graph, the Petersen graph, the Clebsch graph, the Shrikhande graph, the Schl\"{a}fli graph, or one of the three Chang graphs,
   \item If $d=3$, then $\Gamma$ is the line graph of the Petersen graph, the line graph of the Hoffman-Singleton graph, the line graph of a strongly regular graph with parameters $(3250,57,0,1)$, or the line graph of the incidence graph of a projective plane,
   \item If $d=4$, then $\Gamma$ is the line graph of the incidence graph of a generalized quadrangle of order $(q,q)$,
   \item If $d=6$, then $\Gamma$ is the line graph of the incidence graph of a generalized hexagon of order $(q,q)$.
   \end{itemize}
\end{thm}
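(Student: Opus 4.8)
The plan is to prove the classification through the root-system representation of graphs with smallest eigenvalue $-2$, following the method of Cameron, Goethals, Seidel, and Shult. Since the least eigenvalue of $\Gamma$ equals $-2$, the matrix $A+2I$, where $A$ is the adjacency matrix, is positive semidefinite with integer entries; hence it is the Gram matrix of a set of vectors $\{e_u\mid u\in V(\Gamma)\}$ satisfying $\langle e_u,e_u\rangle=2$ and $\langle e_u,e_v\rangle\in\{0,1\}$ for $u\neq v$, with the value $1$ occurring exactly when $u\sim v$. First I would invoke the structure theorem asserting that such a set of norm-$2$ vectors, generating an indecomposable line system (indecomposability corresponding to connectivity of $\Gamma$), spans a root lattice whose irreducible components are of types $A_n$, $D_n$, $E_6$, $E_7$, or $E_8$.

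Second, I would split into the line-graph case and the exceptional case. If every irreducible component has type $A_n$ or $D_n$, the representation exhibits $\Gamma$ as a generalized line graph $L(G;a_1,\dots,a_m)$; imposing distance-regularity then forces either all $a_i=0$, so that $\Gamma$ is an ordinary line graph, or the cocktail party case. If instead some component has type $E_6$, $E_7$, or $E_8$, the rank bound $\le 8$ caps the valency of $\Gamma$, leaving only finitely many feasible parameter sets; a finite check singles out the Petersen, Clebsch, Shrikhande, Schl\"{a}fli, and three Chang graphs, all of diameter $2$, and confirms that no exceptional example has diameter exceeding $2$.

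Third, for the ordinary line graph case $\Gamma=L(G)$ I would use the characterization of when a line graph is distance-regular: $G$ must be regular (or bipartite biregular), and the surviving possibilities are an even polygon (yielding $C_{6n}$ and more generally $C_{2n}$, since $L(C_m)\cong C_m$ has least eigenvalue $-2$ precisely when $m$ is even), a Moore graph of diameter $2$ and girth $5$ (by the Hoffman-Singleton bound of valency $k\in\{3,7,57\}$, giving the line graphs of the Petersen, Hoffman-Singleton, and putative $(3250,57,0,1)$ graphs, all of diameter $3$), or the incidence graph of a regular generalized $n$-gon of order $(q,q)$. The Feit-Higman theorem, together with the nonexistence of generalized octagons of order $(q,q)$, restricts $n$ to $3,4,6$, producing exactly the diameter $3$ (projective plane), $4$ (generalized quadrangle), and $6$ (generalized hexagon) line graphs in the statement. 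Finally, for diameter $2$ I would recall Seidel's classification of strongly regular graphs with smallest eigenvalue $-2$, which assembles the cocktail party, triangular $L(K_n)$, and lattice $L(K_{n,n})$ graphs together with the seven exceptional graphs already isolated above.

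The main obstacle is the root-system dichotomy in the first step, namely proving that a line system generated by norm-$2$ vectors with the prescribed inner products embeds into a direct sum of components of types $A_n$, $D_n$, $E_6$, $E_7$, and $E_8$; this is the deep content of the Cameron-Goethals-Seidel-Shult theorem and it carries the entire classification. Organizing the subsequent case analysis, in particular the finite check of exceptional graphs and the Feit-Higman input that pins down the admissible generalized polygons, is comparatively routine but lengthy.
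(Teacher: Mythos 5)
The paper does not prove this theorem at all: it is quoted as a known classification from Brouwer--Cohen--Neumaier \cite[Thm.~3.12.4 and 4.2.16]{BCN}, so there is no internal proof to compare against. Your outline correctly reconstructs the route taken in that cited source --- the Cameron--Goethals--Seidel--Shult root-lattice dichotomy ($A_n$, $D_n$, $E_6$, $E_7$, $E_8$), the reduction of regular generalized line graphs to ordinary line graphs or cocktail party graphs, the finite check in the exceptional $E_8$ case, Seidel's classification of strongly regular graphs with least eigenvalue $-2$, and the classification of distance-regular line graphs with Feit--Higman (plus the nonexistence of generalized octagons of order $(q,q)$) pinning down the generalized polygons --- so it is essentially the same, standard approach.
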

We note that the distance-regular graphs with least eigenvalue {\em larger} than $-2$ are also known. Besides the complete graphs (with least eigenvalue $-1$), there are the cycles of odd length. Recall that for a complete graph, perfect $1$-codes are only one-element subsets of the vertex set (see also Rem. \ref{completegraph}). Furthermore, a cycle graph $C_{n}$ of length $n$ has eigenvalues $2\cos(\frac{2\pi j}{n})$, where $j=0,1,\ldots,n-1$. If this graph admits a perfect $1$-code, then $-1$ must be an eigenvalue of this graph by Observation \ref{Observation2}. Therefore we can conclude the following straightforward proposition.
\begin{prop} \label{Cycle}
A cycle graph $C_{n}$ admits a perfect $1$-code if and only if $3$ divides $n$.
\end{prop}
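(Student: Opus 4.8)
The plan is to prove the two implications separately, using the eigenvalue criterion of Observation \ref{Observation2} for necessity and an explicit construction for sufficiency.

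For the ``only if'' direction, suppose $C_{n}$ admits a perfect $1$-code. By Observation \ref{Observation2}, $-1$ must be an eigenvalue of $C_{n}$. Since the eigenvalues are $2\cos(\tfrac{2\pi j}{n})$ for $j=0,1,\ldots,n-1$, I would impose $2\cos(\tfrac{2\pi j}{n})=-1$, that is, $\cos(\tfrac{2\pi j}{n})=-\tfrac{1}{2}$. Over one full period this forces $\tfrac{2\pi j}{n}\in\{\tfrac{2\pi}{3},\tfrac{4\pi}{3}\}$, i.e.\ $j\in\{n/3,2n/3\}$. For such an integer $j$ to exist in the range $0\le j<n$ we need $3\mid n$, which is the claim. (Alternatively, one can argue combinatorially: the closed neighbourhoods $\Gamma_{1}(c)$ each have exactly $3$ vertices and must tile the cycle, so $3\mid n$ follows immediately; I would keep the eigenvalue argument as the primary one since the preceding paragraph sets it up.)

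For the ``if'' direction, suppose $n=3m$. I would label the vertices cyclically by $0,1,\ldots,n-1$ and set $C=\{\,3i\mid i=0,1,\ldots,m-1\,\}$. Then $|C|=m=n/3$, which already matches the count forced by Equation \ref{equation} with $k=2$. It remains to verify the two defining properties. First, $C$ is independent, since any two chosen vertices differ by a multiple of $3$ and are therefore nonadjacent. Second, every vertex outside $C$ has a unique neighbour in $C$, which I would check by residues modulo $3$: a vertex $3i+1$ is adjacent to $3i\in C$ while its other neighbour $3i+2\notin C$, and a vertex $3i+2$ is adjacent to $3(i+1)\in C$ while its other neighbour $3i+1\notin C$. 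Hence each non-code vertex has exactly one code neighbour, so $C$ is a perfect $1$-code.

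Since the entire argument is elementary, there is no serious obstacle. The only point needing (mild) care is the trigonometric step in the necessity direction, where one must confirm that $\cos(\tfrac{2\pi j}{n})=-\tfrac{1}{2}$ admits a solution with $0\le j<n$ \emph{precisely} when $3\mid n$, rather than merely for some real argument; this is exactly why the implication runs in both directions and the divisibility condition is sharp.
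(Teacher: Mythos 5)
Your proof is correct and takes essentially the same approach as the paper: the paper derives necessity exactly as you do, from Observation \ref{Observation2} together with the eigenvalues $2\cos(\tfrac{2\pi j}{n})$ of $C_{n}$, and then declares the proposition straightforward. Your explicit every-third-vertex construction simply fills in the sufficiency direction that the paper leaves implicit, and it is verified correctly.
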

Now we have to investigate four cases $d=2$, $d=3$, $d=4$ and $d=6$ in Theorem \ref{thm:drg-2}. The case $d=2$ can be ruled out by Observation \ref{Observation1}. For $d=3$, the line graph of the Petersen graph is an antipodal distance-regular graph and every fiber of this graph is a perfect $1$-code (cf. Cor. \ref{antipodaldrg3}). Furthermore, the line graph of the Hoffman-Singleton graph, the line graph of a strongly regular graph with parameters $(3250,57,0,1)$, and the line graph of the incidence graph of a projective plane don't admit a perfect $1$-code because $-1$ is not an eigenvalue of these graphs.
\subsection{The line graph of the incidence graph of a generalized quadrangle}
Let $\Gamma$ be the incidence graph of a generalized quadrangle of order $(q,q)$. Then it has intersection array $\{q+1,q,q,q;1,1,1,q+1\}$ and five distinct eigenvalues $\{\pm(q+1),\pm \sqrt{2q},0\}$ (cf. \cite[Sec.~$6.5$]{BCN}). Therefore, by Proposition \ref{bipartite}, the only possible case is $q=2$. If $q=2$, then the graph $\Gamma$ is indeed the Tutte-Coxeter graph. If the line graph of this graph admits a perfect $1$-code $C$, then $|C|=9$ by Equation \ref{equation}, because it is $4$-regular with $45$ vertices. Now, in Figure \ref{Fig}, we consider the Doily representation of a generalize quadrangle of order $(2,2)$ with nine marked flags (i.e., incident point-line pairs) with distinct non-black colours such that no two of them share a point or a line. Then the collection of these nine marked flags represents a perfect $1$-code in the line graph of the Tutte-Coxeter graph since each of its vertices corresponds to a flag of $\operatorname{GQ(2,2)}$. We note that the points of these nine marked flags together with the black lines form a $\operatorname{GQ(2,1)}$ subquadrangle, and their lines together with the black points form a complementary $\operatorname{GQ(1,2)}$ subquadrangle. Now we can conclude the following proposition.
\begin{center}
\begin{figure}[h]

 \centering
 %\centerline{\includegraphics[width=2.5\textwidth]{drg3}}
 \centerline{\includegraphics*[height=7cm]{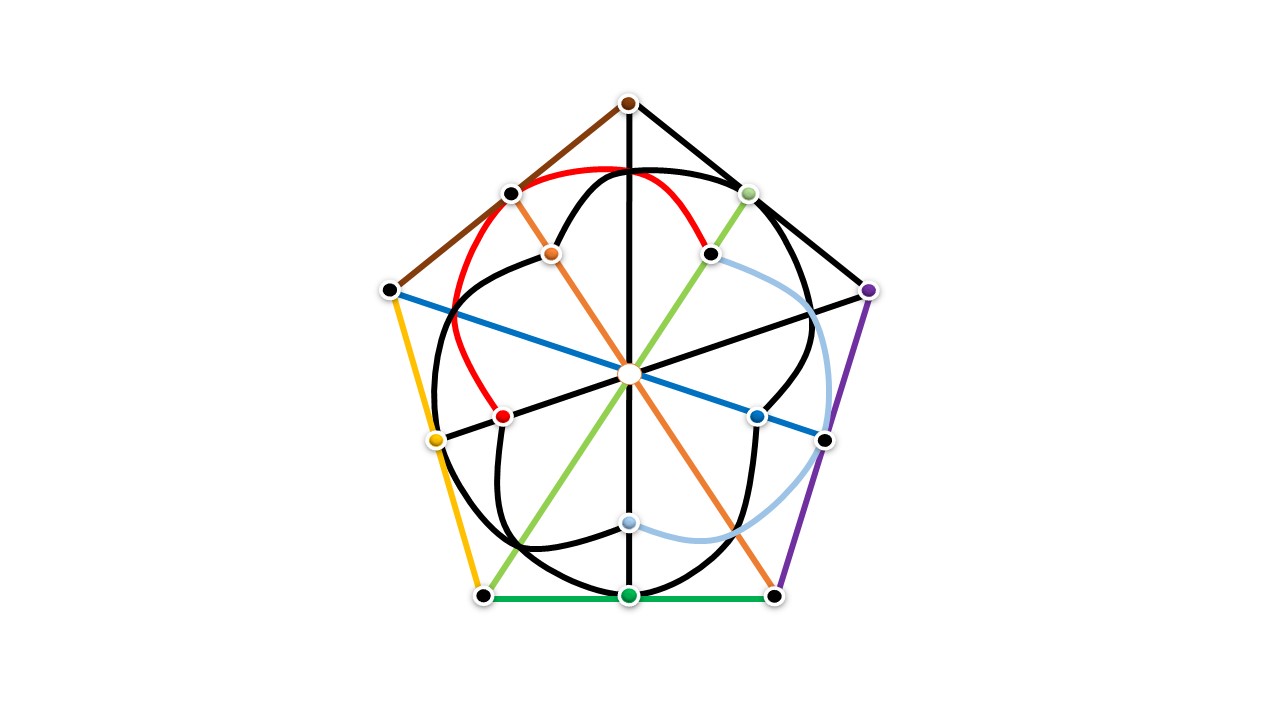}}
 \label{diagram}
 \caption{The Doily representation of $\operatorname{GQ(2,2)}$} \label{Fig}
\end{figure}
\end{center}
\begin{prop} \label{LGQ}
The line graph of the incidence graph of a generalized quadrangle of order $(q,q)$ admits a perfect $1$-code if and only if $q=2$.
\end{prop}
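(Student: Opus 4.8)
The plan is to split the biconditional into its two directions, dispatching the forward implication by a quick spectral obstruction and the reverse by an explicit construction backed by a counting argument.

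For the ``only if'' direction I would invoke Proposition~\ref{bipartite}. Since the incidence graph $\Gamma$ of a generalized quadrangle of order $(q,q)$ is bipartite and $(q+1)$-regular, the existence of a perfect $1$-code in $\operatorname{L(\Gamma)}$ forces $\pm(q+1)$ and $\pm q$ to all be eigenvalues of $\Gamma$. Comparing with the known spectrum $\{\pm(q+1),\pm\sqrt{2q},0\}$, the value $q$ can only coincide with $\sqrt{2q}$, since for $q\geq 1$ it equals neither $q+1$ nor $0$; and $q=\sqrt{2q}$ yields $q^{2}=2q$, hence $q=2$. This is the easy direction.

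For the ``if'' direction, when $q=2$ the graph $\Gamma$ is the Tutte--Coxeter graph, which is $3$-regular on $30$ vertices, so $\operatorname{L(\Gamma)}$ is $4$-regular on $45$ vertices and, by Equation~\ref{equation}, any perfect $1$-code must have size $45/5=9$. I would then reformulate the problem inside $\Gamma$ itself: a perfect $1$-code in $\operatorname{L(\Gamma)}$ is a set $M$ of edges of $\Gamma$ with pairwise disjoint closed edge neighbourhoods, which is equivalent to $M$ being an \emph{induced} matching (the endpoints of $M$ span exactly the edges of $M$, so that no edge of $\Gamma$ joins endpoints of two distinct edges of $M$). Because $\Gamma$ has girth $8$, every edge of $\Gamma$ has exactly $2(3-1)=4$ neighbours in $\operatorname{L(\Gamma)}$, so each closed edge neighbourhood has exactly $5$ vertices. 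Hence any induced matching of size $9$ has $9\cdot 5=45$ vertices covered by pairwise disjoint closed neighbourhoods, which therefore partition the $45$ edges by counting and automatically give a perfect $1$-code. It thus suffices to produce an induced matching of size $9$.

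The construction is the one displayed in Figure~\ref{Fig}: nine flags of $\operatorname{GQ(2,2)}$ using nine distinct points and nine distinct lines, chosen so that a marked point and a marked line are incident only when they form one of the nine flags. I would certify the induced-matching property through the subquadrangle decomposition indicated after Figure~\ref{Fig}. The nine marked points, together with six ``black'' lines, form a $\operatorname{GQ(2,1)}$ grid, so each marked point lies on exactly two black lines and therefore on exactly one marked line, namely its own flag; dually, the nine marked lines with six black points form a $\operatorname{GQ(1,2)}$, so each marked line passes through exactly one marked point. This rules out any incidence among the marked points and lines beyond the nine flags, establishing the induced matching and hence the perfect $1$-code. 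I expect the main obstacle to lie precisely here: confirming that the nine flags form an induced matching rather than merely a matching of the correct size. The subquadrangle decomposition is what keeps this verification clean, reducing it to the standard incidence counts of $\operatorname{GQ(2,1)}$ and $\operatorname{GQ(1,2)}$ instead of an ad hoc check over all $45$ flags.
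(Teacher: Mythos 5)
Your proof is correct and follows essentially the same route as the paper: the ``only if'' direction is exactly the paper's spectral argument via Proposition~\ref{bipartite} applied to the spectrum $\{\pm(q+1),\pm\sqrt{2q},0\}$, and the ``if'' direction uses the same nine-flag Doily configuration of Figure~\ref{Fig}. You are in fact somewhat more careful than the paper's text, which only asserts that no two marked flags share a point or a line (a matching condition, not by itself sufficient); your use of the $\operatorname{GQ}(2,1)$/$\operatorname{GQ}(1,2)$ subquadrangle decomposition to certify the \emph{induced}-matching property, together with the $9\cdot 5=45$ disjoint-closed-neighbourhood count, supplies exactly the verification that the paper delegates to the figure and its accompanying remark.
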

\subsection{The line graph of the incidence graph of a generalized hexagon} \label{SLGH}
Let $\Gamma$ be the incidence graph of a generalized hexagon of order $(q,q)$, where $q \geq 2$. Then it has intersection array $$\{q+1,q,q,q,q,q;1,1,1,1,1,q+1\}$$ and seven distinct eigenvalues $\{\pm(q+1),\pm \sqrt{3q},\pm \sqrt{q},0\}$. If the line graph of this graph admits a perfect $1$-code, then the only possible case is $q=3$ by Proposition \ref{bipartite}. Let $\Gamma$ be the incidence graph of a generalized hexagon of order $(3,3)$. If the line graph $\operatorname{L(\Gamma)}$ admits a perfect $1$-code $C$, then $|C|=208$ by Equation \ref{equation} since this graph is $6$-regular with $1456$ vertices. On the other hand, the code $C$ in the line graph $\operatorname{L(\Gamma)}$ can be related to a code $\overline{C}$ in the graph $\Gamma$ in such a way that $\overline{C}$ is a $1$-regular induced subgraph of the graph $\Gamma$ consisting of $208$ edges with mutually disjoint closed edge neighbourhoods. Additionally, $\overline{C}$ is also a vertex cover of the graph $\Gamma$.

Recall that the only known generalized hexagon of order $(3,3)$ is called the \emph{split Cayley hexagon of order $3$}. Let $\Gamma$ be the incidence graph of the split Cayley hexagon of order $3$. Then it has a lot of substructures which are generalized hexagons of order $(1,3)$ and $(3,1)$ (cf. \cite{DV}). Every generalized hexagon of order $(1,q)$ is isomorphic to the double of a projective plane of order $q$ for which the point set of the double is the set of points and lines and the line set is exactly the set of flags of the projective plane.
\begin{lem} \label{GH(1,q)}
The incidence graph of the double of a projective plane of order $q$ contains at most $q^{2}+q+1$ edges with mutually disjoint closed edge neighbourhoods.
\end{lem}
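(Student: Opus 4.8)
The plan is to first unwind what ``the incidence graph of the double'' actually is and then to translate the packing condition into a statement purely about the projective plane $\Pi$ of order $q$. Write $B$ for the incidence graph of $\Pi$: a $(q+1)$-regular bipartite graph whose two vertex classes are the $q^2+q+1$ points and the $q^2+q+1$ lines of $\Pi$, and whose edges are the flags of $\Pi$. By the description recalled just before the lemma, the double of $\Pi$ is the generalized hexagon of order $(1,q)$ whose point--line incidence graph is exactly the subdivision $S(B)$: one keeps the points and lines of $\Pi$ as vertices, places a new vertex on each flag, and joins it to the point and to the line of that flag. Thus an edge of $S(B)$ is a \emph{half-flag}: a flag $(p,\ell)$ of $\Pi$ together with a choice of one of its two ends, which I call the \emph{tip} (the other end being the \emph{base}).

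Next I would pin down what ``mutually disjoint closed edge neighbourhoods'' means in $S(B)$. Since $S(B)$ is bipartite with the flag-vertices forming one class, two half-flags $e$ and $f$ have disjoint closed edge neighbourhoods exactly when they are vertex-disjoint and no edge of $S(B)$ joins an end of $e$ to an end of $f$. Spelling this out, the only possible connecting edges are ``tip of one meets base of the other'', so the condition becomes: the tips are pairwise distinct and the tip of each selected half-flag lies on no other selected flag, while bases may be shared. Hence a family $M$ of edges with pairwise disjoint closed edge neighbourhoods is precisely a set $F$ of distinct flags of $\Pi$, one distinguished end (tip) per flag, such that each tip meets only its own flag; and $|M|=|F|$.

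I would then bound $|F|$. Split $M$ according to whether the tip is a point or a line, $M=M_{\mathrm{pt}}\sqcup M_{\mathrm{ln}}$. The point-tips are distinct points, each lying on a unique selected flag, so $|M_{\mathrm{pt}}|\le q^2+q+1$; dually $|M_{\mathrm{ln}}|\le q^2+q+1$. The target estimate $|M|\le q^2+q+1$ then amounts to showing that these two budgets cannot both be consumed independently; the cleanest route is to prove that, under the disjointness hypothesis, the selected flags meet each point in at most one flag and each line in at most one flag, i.e.\ that $F$ is a matching of $B$, whence $|F|\le q^2+q+1$ because a matching in a bipartite graph cannot exceed the size of one class. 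Sharpness is immediate from the other direction: $B$ is a regular bipartite graph, so it has a perfect matching of size $q^2+q+1$, and a perfect matching (with any choice of tips) manifestly yields $q^2+q+1$ half-flags with pairwise disjoint closed edge neighbourhoods.

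The step I expect to be the main obstacle is exactly the reduction ``$F$ is a matching of $B$'', that is, excluding the configurations in which several selected flags share a common base -- a pencil of flags through one point, or the flags supported on one line. The combinatorial condition by itself does \emph{not} forbid such stars, so this is where the geometry of $\Pi$ must be used: one has to exploit that two points lie on a unique line (equivalently that $B$ has girth $6$ and equal vertex classes of size $q^2+q+1$), together with the ambient structure in which these flags sit, in order to charge the flags of each star injectively against points or lines that are not otherwise consumed, thereby collapsing the naive bound $2(q^2+q+1)$ down to $q^2+q+1$. Setting up this charging and checking the boundary cases where point-tips and line-tips interact is the technical crux; once it is in place, the remaining counting is routine.
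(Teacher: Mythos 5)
Your translation of the hypothesis is the line-graph reading of ``disjoint closed edge neighbourhoods'': the chosen edges of $S(B)$ must be pairwise vertex-disjoint and no edge of $S(B)$ may join an end of one to an end of another (i.e., they form an induced matching). Under that reading your derivation is internally correct, and in particular your observation that ``bases may be shared'' is right --- but this is exactly what sinks the plan. The step you defer as the technical crux, that the selected flags form a matching of $B$, is not merely hard: it is false, and so is the bound itself under your reading. Fix a point $r$ of the plane and select the $q+1$ edges $\{\ell,(r,\ell)\}$ of $S(B)$ for the lines $\ell$ through $r$, together with, for each of the $q^{2}+q$ points $p\neq r$, one edge $\{p,(p,\ell_{p})\}$ where $\ell_{p}$ is a line through $p$ missing $r$. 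These $(q+1)^{2}$ edges are pairwise vertex-disjoint, and no edge of $S(B)$ joins two of their ends: every edge of $S(B)$ at the vertex $p$ ends at a flag-vertex whose point is $p$, every edge at the vertex $\ell$ ends at a flag-vertex whose line is $\ell$, and by construction no selected flag-vertex other than the chosen partner of $p$ (resp.\ of $\ell$) has this property, while original vertices are never adjacent to original vertices, nor flag-vertices to flag-vertices. So all of your derived conditions hold, yet $(q+1)^{2}>q^{2}+q+1$, and the $q+1$ selected flags through $r$ all share the point $r$. Hence no charging argument can establish your matching claim; under your reading of the hypothesis the conclusion of the lemma simply fails, already for the Fano plane.

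The paper's proof rests on the stronger, vertex-based reading of the closed edge neighbourhood of $e$: the set of all vertices at distance at most $1$ from $e$, so that disjointness forbids any single vertex of $S(B)$ from being adjacent to (or an end of) two distinct chosen edges. This is precisely what excludes shared bases: if $(p,\ell)$ and $(p',\ell)$ were both chosen with point tips, the line-vertex $\ell$ would be adjacent to both flag-vertices $(p,\ell)$ and $(p',\ell)$; dually, two line-tipped flags cannot share their point; and the two implications written out in the paper's proof rule out a point-tipped and a line-tipped flag sharing a line or a point. Consequently any two selected flags have distinct points and distinct lines --- the matching property you wanted --- and the bound follows at once, since the map sending a selected flag to its line is then injective into a set of size $q^{2}+q+1$. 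So the gap in your proposal is not a missing technical estimate but the very first step, the choice of notion of closed edge neighbourhood: with the induced-matching notion you adopted the statement is false, and a proof exists only for the stronger vertex-based notion that the paper's argument implicitly uses.
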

\begin{proof}
Let $\overline{C_{1}}$ be a collection of edges with mutually disjoint closed edge neighbourhoods of the incidence graph of this structure. If an edge $(p,(p,\ell))$ is in the set $\overline{C_{1}}$, for a point $p$ and line $\ell$, then $(\ell,(p',\ell))$ is not in $\overline{C_{1}}$ for every point $p'$ in $\ell$. Similarly, if an edge $(\ell,(p,\ell))$ is in the set $\overline{C_{1}}$, for a point $p$ and line $\ell$, then $(p,(p,\ell'))$ is not in $\overline{C_{1}}$ for every line $\ell'$ which contains the point $p$. On the other hand, a projective plane of order $q$ contains $q^{2}+q+1$ points and $q^{2}+q+1$ lines which implies that the set $\overline{C_{1}}$ contains at most $q^{2}+q+1$ edges with mutually disjoint closed edge neighbourhoods and the result follows.
\end{proof}

By using the {\sf FinInG} package \cite{fining} for {\sf GAP} \cite{GAP}, the graph $\Gamma$ can be constructed as the incidence graph of a block design%
\footnote{It has $364$ points and $364$ blocks such that each block consists of $4$ points and each point belongs to $4$ blocks.} with the point set $\{1,2,\ldots,364\}$ by the following commands.
\begin{verbatim}
gh:=SplitCayleyHexagon(3);
des:=BlockDesignOfGeneralisedPolygon(gh);
\end{verbatim}
Without loss of generality, let the point $1$ be outside the code $\overline{C}$. Then this vertex belongs to a substructure with the following point set $A$ of size $26$ and block set $B$ of size $52$ consisting of all blocks of this design with the property that each point in $A$ belongs to $4$ blocks in $B$.
\begin{verbatim}
A:={1,2,3,4,5,6,7,8,9,12,15,18,23,28,33,44,65,68,69,88,91,92,129,130,178,179};
B:={[1,2,111,112],[1,28,55,56],[1,65,127,128],[1,88,176,177],[2,3,224,227],
[2,4,225,228],[2,5,223,226],[3,23,45,46],[3,129,245,247],[3,178,311,313],
[4,18,34,35],[4,130,246,248],[4,179,292,312],[5,6,10,11],[5,7,13,14],
[5,8,16,17],[6,44,93,94],[6,91,189,194],[6,92,190,195],[7,33,70,71],
[7,68,140,145],[7,69,141,146],[8,9,21,22],[8,12,26,27],[8,15,31,32],
[9,88,186,191],[9,178,321,327],[9,179,281,293],[12,65,137,142],
[12,129,259,265],[12,130,260,266],[15,18,38,41],[15,23,49,52],
[15,28,59,62],[18,69,156,167],[18,92,203,214],[23,68,155,166],
[23,91,202,213],[28,33,78,83],[28,44,101,106],[33,130,222,279],
[33,163,178,331],[44,129,171,300],[44,179,208,338],[65,68,175,344],
[65,92,218,297],[68,179,241,355],[69,88,210,334],[69,129,244,357],
[88,91,161,306],[91,130,242,361],[92,178,239,356]}
\end{verbatim}
In this substructure, if we consider each block as a line, then it is isomorphic to the generalized hexagon of order $(1,3)$%
\footnote{We double-checked this with {\sf GAP} \cite{GAP}.}. Moreover, each block in $B$ consists of two points in $A$ and two points in the following set $P$ of size $104$.
\begin{verbatim}
P:={10,11,13,14,16,17,21,22,26,27,31,32,34,35,38,41,45,46,49,52,55,56,59,62,
70,71,78,83,93,94,101,106,111,112,127,128,137,140,141,142,145,146,155,156,
161,163,166,167,171,175,176,177,186,189,190,191,194,195,202,203,208,210,213,
214,218,222,223,224,225,226,227,228,239,241,242,244,245,246,247,248,259,260,
265,266,279,281,292,293,297,300,306,311,312,313,321,327,331,334,338,344,355,
356,357,361}
\end{verbatim}
Now consider another substructure with point set $M$ consisting of the points not in $A \cup P$ and block set $N$ consisting of the blocks not in $B$. In this new substructure, if we consider each block as a line, then it is indeed isomorphic to the interesting subgeometry with $234$ points and $312$ lines which is illustrated in \cite[Sec.~$3$]{DV}. By these structures and the properties of the code $\overline{C}$, we could obtain a contradiction as follows. Let $\Gamma_{i}(u)$ denote the set of vertices at distance $i$ from the vertex $u$ in the graph $\Gamma$, where $1\leq i \leq 6$. Recall that without loss of generality, the point $1$ is outside the code $\overline{C}$. We use \emph{inner} and \emph{outer} for elements in $\overline{C}$ and outside $\overline{C}$, respectively. Therefore the vertex $1$ is adjacent to $4$ inner blocks. Moreover, these $4$ blocks are adjacent to $4$ inner points and $8$ outer points in $\Gamma_{2}(1)$ forming the sets $\Gamma_{2}^{in}(1)$ and $\Gamma_{2}^{out}(1)$, respectively. We proceed this approach to find $\Gamma_{6}^{in}(1)$ and $\Gamma_{6}^{out}(1)$. The $4$ inner points of $\Gamma_{2}^{in}(1)$ are adjacent to $12$ outer blocks in $\Gamma_{3}(1)$ forming the set $\Gamma_{3}^{out}(1)$, and the $8$ outer points of $\Gamma_{2}^{out}(1)$ are adjacent to $24$ inner blocks in $\Gamma_{3}(1)$ forming the set $\Gamma_{3}^{in}(1)$. Moreover, the $12$ outer blocks of $\Gamma_{3}^{out}(1)$ are adjacent to $36$ inner points in $\Gamma_{4}(1)$ forming the set $\Gamma_{4}^{in}(1)$, and the $24$ inner blocks of $\Gamma_{3}^{in}(1)$ are adjacent to $24$ inner points and $48$ outer points in $\Gamma_{4}(1)$ forming the sets $\Gamma_{4}^{in'}(1)$ and $\Gamma_{4}^{out}(1)$, respectively. Furthermore, the $36$ inner points of $\Gamma_{4}^{in}(1)$ are adjacent to $36$ inner blocks and $72$ outer blocks in $\Gamma_{5}(1)$ forming the sets $\Gamma_{5}^{in}(1)$ and $\Gamma_{5}^{out}(1)$, respectively, and the $24$ inner points of $\Gamma_{4}^{in'}(1)$ are adjacent to $72$ outer blocks in $\Gamma_{5}(1)$ forming the set $\Gamma_{5}^{out'}(1)$, and the $48$ outer points of $\Gamma_{4}^{out}(1)$ are adjacent to $144$ inner blocks in $\Gamma_{5}(1)$ forming the set $\Gamma_{5}^{in'}(1)$. Finally, the $144$ inner blocks of $\Gamma_{5}^{in'}(1)$ are adjacent to $144$ inner points in $\Gamma_{6}(1)$ forming the set $\Gamma_{6}^{in}(1)$ and the $99$ remaining points in $\Gamma_{6}(1)$ are outer points forming the set $\Gamma_{6}^{out}(1)$ since the code $\overline{C}$ consists of $4+24+36+144=208$ points.

By using {\sf GAP} \cite{GAP}, it turns out that there are $4$ points in $A \cap \Gamma_{2}(1)$ and $8$ points in $P \cap \Gamma_{2}(1)$. Moreover, there are $12$ blocks in $B \cap \Gamma_{3}(1)$ and $24$ blocks in $N \cap \Gamma_{3}(1)$. Furthermore, there are $12$ points in $A \cap \Gamma_{4}(1)$, $24$ points in $P \cap \Gamma_{4}(1)$ and $72$ points in $M \cap \Gamma_{4}(1)$. Moreover, there are $36$ blocks in $B \cap \Gamma_{5}(1)$ and $288$ blocks in $N \cap \Gamma_{5}(1)$. Finally, there are $9$ points in $A \cap \Gamma_{6}(1)$, $72$ points in $P \cap \Gamma_{6}(1)$ and $162$ points in $M \cap \Gamma_{6}(1)$.

Moreover, there exist five cases based on the number of inner points $i$ in $A \cap \Gamma_{2}(1)$, where $0 \leq i \leq 4$. If $|A \cap \Gamma_{2}^{in}(1)|=i$, then $|A \cap \Gamma_{2}^{out}(1)|=|P \cap \Gamma_{2}^{in}(1)|=4-i$, $|P \cap \Gamma_{2}^{out}(1)|=4+i$, $|B \cap \Gamma_{3}^{in}(1)|=|A \cap (\Gamma_{4}^{in'}(1) \cup \Gamma_{4}^{out}(1))|=|N \cap \Gamma_{3}^{out}(1)|=12-3i$, $|N \cap \Gamma_{3}^{in}(1)|=|M \cap \Gamma_{4}^{in'}(1)|=12+3i$, $|B \cap \Gamma_{3}^{out}(1)|=|A \cap \Gamma_{4}^{in}(1)|=|B \cap \Gamma_{5}^{in}(1)|=3i$, $|P \cap \Gamma_{4}^{in}(1)|=|B \cap \Gamma_{5}^{out}(1)|=6i$, $|P \cap (\Gamma_{4}^{in'}(1) \cup \Gamma_{4}^{out}(1))|=24-6i$, $|M \cap \Gamma_{4}^{in}(1)|=36-9i$, and $|M \cap \Gamma_{4}^{out}(1)|=24+6i$. Moreover, as each point of $P \cap \Gamma_{4}(1)$ has the neighbor from $B$ in $\Gamma_{3}(1)$, it follows that each block of $B \cap \Gamma_{5}(1)$ has one neighbor in $A \cap \Gamma_{4}(1)$ and one in $A \cap \Gamma_{6}(1)$.

Now suppose that $i \geq 1$. Then $|A \cap \Gamma_{6}^{in}(1)|=6$ and $|A \cap \Gamma_{6}^{out}(1)|=3$. Since each point of $A \cap \Gamma_{6}^{out}(1)$ has precisely $i$ neighbors in $B \cap \Gamma_{5}^{in}(1)$, it follows that $|B \cap \Gamma_{5}^{in'}(1)|=6+3(4-i)=18-3i$, and then $|A \cap \Gamma_{4}^{out}(1)|=6-i$ and $|A \cap \Gamma_{4}^{in'}(1)|=6-2i$. Therefore, there are $i+3i+(6-2i)+6=12+2i>13$ inner points in $A$, each of which is adjacent to an inner block in $B$, contradicting Lemma \ref{GH(1,q)} (see Figure \ref{Fig2}). This then leaves us with the case $i=0$.

Let there exist $a$ inner and $9-a$ outer points in $A \cap \Gamma_{6}(1)$, where $0 \leq a \leq 9 $. Then there are $3a$ blocks in $B \cap \Gamma_{5}^{out'}(1)$. Moreover, $|A \cap \Gamma_{4}^{in'}(1)|=a$ since each block of $B \cap \Gamma_{5}^{out'}(1)$ has one neighbor in $A \cap \Gamma_{4}^{in'}(1)$, and each point of $A \cap \Gamma_{4}^{in'}(1)$ has three neighbors in $B \cap \Gamma_{5}^{out'}(1)$. If $a \geq 1$, then consider the $(3,2)$-biregular bipartite incidence graph with the point set consisting of the $2a$ inner points in the union of $A \cap \Gamma_{4}^{in'}(1)$ and $A \cap \Gamma_{6}(1)$, and the block set consisting of the $3a$ blocks in $B \cap \Gamma_{5}^{out'}(1)$. This graph contains at least $35$ vertices since the girth of this graph is at least $12$. This implies that $a \geq 7$ and therefore there are at least $14$ inner points in $A$, contradicting Lemma \ref{GH(1,q)}. It follows that the only possible case for $i=0$ is as in Figure \ref{Fig3} and we prove that it is impossible. On one side, each of the $12$ points in $P \cap \Gamma_{4}^{in'}(1)$ has three neighbors in $N \cap \Gamma_{5}^{out'}(1)$ and two points in the set $P$ can not share the same neighbor in $N$. Therefore each block in the set $R$, consisting of the $36$ blocks in $N \cap \Gamma_{5}^{out'}(1)$ which do not have a neighbor in $P \cap \Gamma_{4}^{in'}(1)$, must be adjacent to a unique point in $P \cap \Gamma_{6}^{in}(1)$. Moreover, each of the $36$ points in $P \cap \Gamma_{6}^{in}(1)$ has exactly one neighbor in $N \cap \Gamma_{5}^{out'}(1)$.  To see this, suppose in contrary that there exists a point in $P \cap \Gamma_{6}^{in}(1)$ which has at least two neighbors in $N \cap \Gamma_{5}^{out'}(1)$. Then there exists a point $u \in P \cap \Gamma_{6}^{in}(1)$ which has no neighbor in $R$. Furthermore, there are two blocks in $B \cap \Gamma_{5}^{in'}(1)$ at distance $3$  and therefore two points in $P \cap \Gamma_{6}^{in}(1)$ at distance $4$ form $u$ which are adjacent to at most six blocks in $R$. Moreover, there are at most three blocks in $N \cap \Gamma_{5}^{out}(1)$ at distance $3$ and therefore at most three points in $P \cap \Gamma_{6}^{in}(1)$ at distance $4$ form $u$ which are adjacent to at most six blocks in $R$. Additionally, there are no blocks in $N \cap \Gamma_{3}^{in}(1)$ at distance $3$ from $u$. It follows that there is no path of length at most $6$ from $u$ to some blocks of $R$, a contradiction. This implies that each of the $36$ points in $P \cap \Gamma_{6}^{in}(1)$ has exactly two neighbors in $N \cap \Gamma_{5}^{out}(1)$. On the other side, by using {\sf GAP} \cite{GAP}, it turns out that there are exactly $27$ points in $P \cap \Gamma_{6}^{in}(1)$ which have two neighbors in $\Gamma_{5}^{in}(1) \cup \Gamma_{5}^{out}(1)$%
\footnote{There are $2^{4}=16$ cases for these sets depending on the choice of the four inner points in $P \cap \Gamma_{2}^{in}(1)$. All of these cases have been checked with {\sf GAP} \cite{GAP}.}, a contradiction, and this completes the proof.

\begin{center}
\begin{figure}[h]

 \centering
 %\centerline{\includegraphics[width=2.5\textwidth]{drg3}}
 \centerline{\includegraphics*[height=12cm]{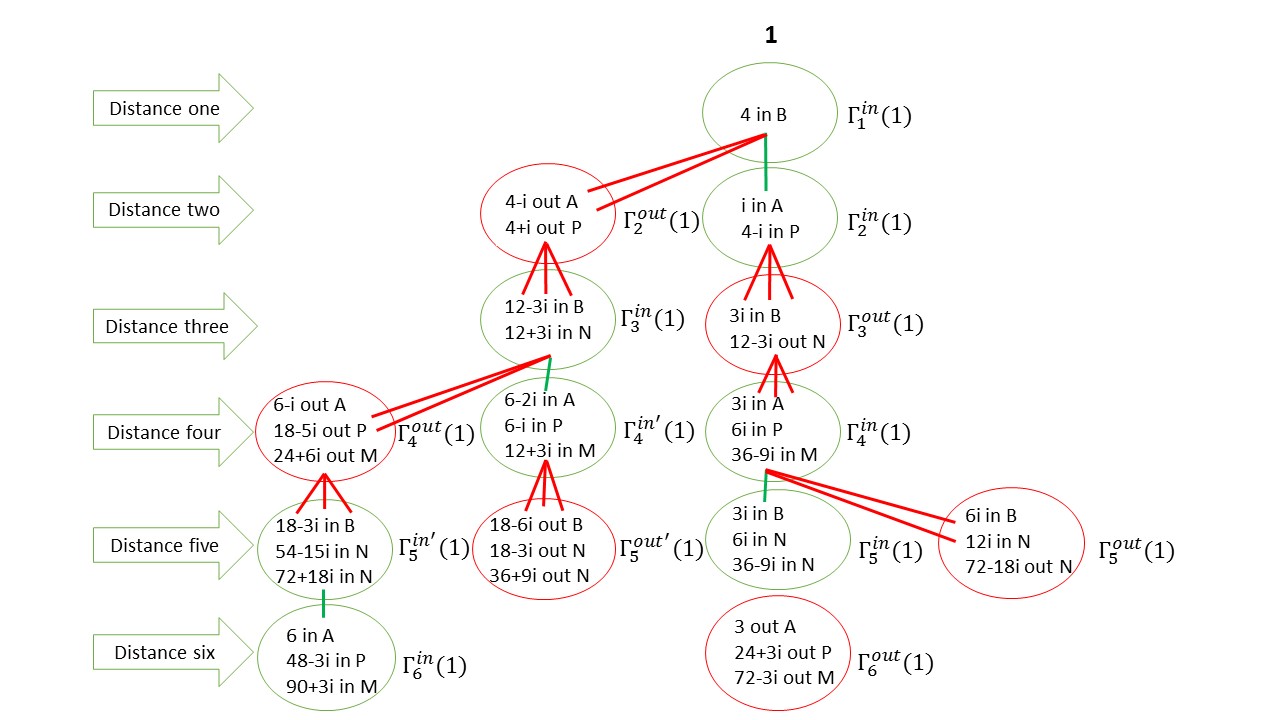}}
 \label{diagram}
 \caption{The incidence graph of $\operatorname{GH(3,3)}$ for $i>0$} \label{Fig2}
\end{figure}
\end{center}

\begin{center}
\begin{figure}[h]

 \centering
 %\centerline{\includegraphics[width=2.5\textwidth]{drg3}}
 \centerline{\includegraphics*[height=12cm]{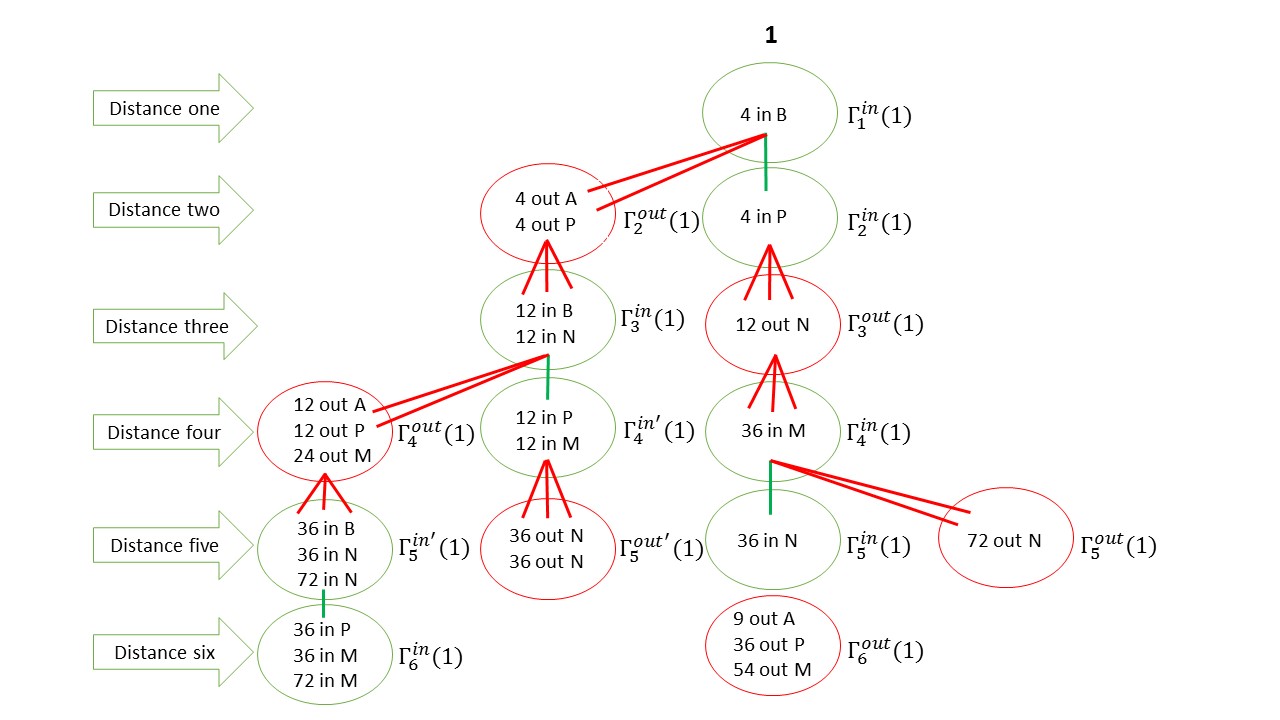}}
 \label{diagram}
 \caption{The incidence graph of $\operatorname{GH(3,3)}$ for $i=0$} \label{Fig3}
\end{figure}
\end{center}

Therefore we can conclude the following proposition.
\begin{prop} \label{LGH}
If the line graph of the incidence graph of a generalized hexagon of order $(q,q)$, where $q \geq 2$, admits a  perfect $1$-code, then $q=3$. Moreover, the incidence graph the split Cayley hexagon of order $3$ doesn't admit a perfect $1$-code.%
\footnote{The anonymous referee double-checked this with {\sf GAP} \cite{GAP}.}
\end{prop}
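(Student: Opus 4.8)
The plan is to separate the claim into an eigenvalue restriction forcing $q=3$ and a nonexistence statement for the unique known $\operatorname{GH}(3,3)$. The first half reads off directly from the spectrum already recorded: the incidence graph $\Gamma$ of a generalized hexagon of order $(q,q)$ is bipartite and $(q+1)$-regular with eigenvalues $\{\pm(q+1),\pm\sqrt{3q},\pm\sqrt{q},0\}$, so Proposition \ref{bipartite} requires $-(k-1)=-q$ to lie in this set. Comparing $q$ with the absolute values $q+1,\sqrt{3q},\sqrt{q},0$, the relation $q=q+1$ is impossible, $q=\sqrt{q}$ gives the excluded value $q=1$, and $q=0$ is excluded, leaving only $q=\sqrt{3q}$, i.e.\ $q=3$.

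For the second half I would work entirely inside $\Gamma$, the incidence graph of the split Cayley hexagon of order $3$, using the translation recorded above: a perfect $1$-code in $\operatorname{L(\Gamma)}$ is the same data as a set $\overline{C}$ of $208$ edges of $\Gamma$ (by Equation \ref{equation}) with pairwise disjoint closed edge-neighbourhoods that also covers every vertex. The engine of the argument is the decomposition of the split Cayley hexagon into a $\operatorname{GH}(1,3)$ subgeometry (a double of $\operatorname{PG}(2,3)$) on a $26$-point set $A$ together with its complement, because Lemma \ref{GH(1,q)} caps the number of inner points inside $A$ at $q^{2}+q+1=13$. I would fix an outer vertex (the point $1$, without loss of generality) and peel off the distance layers $\Gamma_{i}(1)$ for $i=1,\dots,6$, at each step splitting vertices into inner and outer and propagating the counts forced by valency $4$ and the unique-neighbour-in-$\overline{C}$ condition; the whole configuration is then controlled by the single parameter $i:=|A\cap\Gamma_{2}^{in}(1)|\in\{0,1,2,3,4\}$.

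The case analysis proceeds as follows. For every $i\geq 1$ the propagated counts yield at least $12+2i>13$ inner points inside $A$, contradicting Lemma \ref{GH(1,q)}. This leaves $i=0$, where I would introduce $a:=|A\cap\Gamma_{6}^{in}(1)|$: a girth-$12$ estimate on the $(3,2)$-biregular incidence graph joining the $2a$ inner points of $A$ in layers $4$ and $6$ to the $3a$ blocks of $B$ in layer $5$ forces $a\geq 7$ whenever $a\geq 1$, again breaching the bound of $13$, so $a=0$. At this point the layer counts are internally consistent, so any contradiction must come from the finer incidence geometry: one argues that the $36$ blocks of $N$ at distance $5$ missing $P\cap\Gamma_{4}^{in'}(1)$ must each be matched to a distinct inner point of $P\cap\Gamma_{6}^{in}(1)$, forcing every such point to have exactly two neighbours in $N\cap\Gamma_{5}^{out}(1)$, whereas a direct count in the actual geometry produces only $27$ such points.

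The main obstacle — and the reason I would not expect a fully hand-written proof — is precisely this residual $i=0$ (indeed $a=0$) case: the equitable layer-counting is satisfiable, so one is pushed into the explicit incidence structure of the unique $\operatorname{GH}(3,3)$, and the decisive discrepancy ($27$ versus $36$) looks like an arithmetic accident of that particular geometry rather than a consequence of the generic double-plane bound of Lemma \ref{GH(1,q)}. I would therefore combine the clean Lemma \ref{GH(1,q)} argument for the cases $i\geq 1$ and $a\geq 1$ with a machine verification — carried out in the {\sf FinInG} block-design model of the split Cayley hexagon — for the finitely many remaining configurations, which are indexed by the choice of the four inner points in $P\cap\Gamma_{2}^{in}(1)$.
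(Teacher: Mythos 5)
Your proposal is correct and follows essentially the same route as the paper's own proof: the eigenvalue argument via Proposition \ref{bipartite} forcing $q=3$, the translation to $208$ edges with disjoint closed edge-neighbourhoods, the $\operatorname{GH}(1,3)$ subgeometry together with Lemma \ref{GH(1,q)}, the layer analysis parametrized by $i$ (with the $12+2i>13$ contradiction for $i\geq 1$ and the girth-$12$ argument forcing $a=0$), and the final $36$-versus-$27$ discrepancy settled by {\sf GAP} computation over the $16$ choices of inner points in $P\cap\Gamma_{2}^{in}(1)$. Nothing of substance differs, including your (accurate) assessment that the last case resists a purely hand-written argument and requires machine verification in the explicit geometry.
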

\section{Distance-regular graphs with small diameter}
As far as we know, the general problem of characterizing distance-regular graphs with small diameter greater than $2$ which admit a perfect $1$-code is hard. Therefore we give an overview up to diameter $4$. For a complete graph, the only perfect $1$-codes are the one-element subsets (cf. Prop.~\ref{completegraph}). Moreover, by Observation \ref{Observation1}, there is no perfect $1$-code in a strongly regular graph.

Now suppose that $C$ is a perfect $1$-code in a distance-regular graph with diameter $3$. Then the distance between two vertices in $C$ must be $3$.
Let $C$ be a perfect $1$-code in a bipartite distance-regular graph with diameter $3$. Then $C$ contains exactly two elements from different parts since the distance between two vertices in $C$ must be $3$. Therefore this graph must be a complete bipartite graph minus a perfect matching. Hence we can conclude the following proposition.
\begin{prop} \label{Bipartitedrg3}
Let $\Gamma$ be a bipartite distance-regular graph with diameter $3$. Then $\Gamma$ admits a perfect $1$-code $C$ if and only if $\Gamma$ is a complete bipartite graph minus a perfect matching and $C$ contains exactly two elements at distance $3$ from different parts.
\end{prop}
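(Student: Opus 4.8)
Throughout, write $k$ for the valency of $\Gamma$ and $X,Y$ for its two parts, and recall from the discussion preceding the statement that any perfect $1$-code $C$ here consists of exactly two vertices $x\in X$ and $y\in Y$ with $d(x,y)=3$. The plan is to prove the two implications separately; the nontrivial content is the forward direction, where one must deduce the rigid structure of $\Gamma$ from the mere existence of such a code.

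For the forward direction, the first step is to determine the sizes of the two parts. Since $\Gamma$ is bipartite, a vertex of $X\setminus\{x\}$ cannot be adjacent to $x$, so its unique neighbour in $C$ must be $y$; hence every vertex of $X\setminus\{x\}$ is adjacent to $y$. As $y$ has valency $k$, all its neighbours lie in $X$, and $x\notin N(y)$ because $d(x,y)=3$, this forces $N(y)=X\setminus\{x\}$ and therefore $|X|=k+1$. The symmetric argument gives $|Y|=k+1$. The second step is then immediate: $\Gamma$ is a $k$-regular bipartite graph whose parts each have size $k+1$, so every vertex has exactly one non-neighbour in the opposite part. Consequently the non-edges of $\Gamma$ form a $1$-regular spanning bipartite subgraph, i.e.\ a perfect matching of $K_{k+1,k+1}$, and $\Gamma$ is $K_{k+1,k+1}$ with a perfect matching deleted.

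For the converse, label $X=\{x_{0},\ldots,x_{k}\}$ and $Y=\{y_{0},\ldots,y_{k}\}$ so that the deleted matching is $\{x_{i}y_{i}:0\le i\le k\}$, whence $x_{i}\sim y_{j}$ exactly when $i\neq j$. I would verify directly that $C=\{x_{0},y_{0}\}$ is a perfect $1$-code: the two vertices are non-adjacent (their edge was deleted) and at distance $3$, while each remaining vertex $x_{j}$ (respectively $y_{j}$) with $j\neq 0$ is adjacent to $y_{0}$ (respectively $x_{0}$) and to no other element of $C$. Thus $C$ is independent and every vertex outside $C$ has a unique neighbour in $C$.

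I expect no serious obstacle. The converse and the count $|C|=2$ (via Equation \ref{equation}, giving $|V|/(k+1)=2$) are routine. The crux is the forward direction, and its key idea is simply that the two code vertices must each dominate the whole opposite part; this pins the part sizes at $k+1$, after which regularity forces the complement of $\Gamma$ inside $K_{k+1,k+1}$ to be a perfect matching. The only subtlety worth recording is the exclusion of other code sizes: a one-element code would make $\Gamma$ complete by Remark \ref{completegraph}, contradicting diameter $3$, and three pairwise distance-$3$ vertices cannot occur in a bipartite graph, so $|C|=2$ as used throughout.
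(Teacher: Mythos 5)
Your proposal is correct and follows essentially the same route as the paper: the paper's (terse) argument preceding the proposition likewise notes that diameter $3$ together with Observation \ref{Observation1} forces the code to consist of exactly two vertices at distance $3$ in different parts, and then deduces that $\Gamma$ must be a complete bipartite graph minus a perfect matching. Your write-up merely fills in the details the paper leaves implicit (each code vertex dominating the opposite part, hence $|X|=|Y|=k+1$, and the routine verification of the converse), so there is nothing to criticize.
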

Note that every antipodal distance-regular graph of diameter $3$ (including the bipartite ones) admits a perfect $1$-code -- in fact, each fiber is such a code. Now we deal with primitive distance-regular graphs with diameter $3$.
\begin{Observation} \label{Observation3}
If a primitive distance-regular graph $\Gamma$ with diameter $3$ admits a perfect $1$-code, then it has eigenvalue $-1$ by Observation \ref{Observation2} and therefore its distance-$3$ graph is strongly regular (cf. \cite[Prop.~4.2.17]{BCN}). It follows that if this strongly regular graph has parameters $(n,k,\lambda,\mu)$, then $\lambda \geq |C|-2$ because the perfect code $C$ is a clique in the distance-$3$ graph by Observation \ref{Observation1}.
\end{Observation}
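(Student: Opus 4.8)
The plan is to chain the two preceding observations with the known description of the distance-$3$ graph of a primitive distance-regular graph of diameter $3$, so the argument splits into three short steps. First I would extract the spectral information: since $C$ is a perfect $1$-code, Observation \ref{Observation2} applies directly and tells us that $\{C,V\backslash C\}$ is an equitable partition whose quotient matrix has eigenvalues $k$ and $-1$; because every eigenvalue of the quotient matrix of an equitable partition is an eigenvalue of the adjacency matrix, $-1$ is an eigenvalue of $\Gamma$. This is the only step that uses the existence of the code to produce spectral data.

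Next I would promote this eigenvalue condition to a structural statement about the distance-$3$ graph $\Gamma_{3}$. Since $\Gamma$ is a primitive distance-regular graph of diameter $3$ having $-1$ as an eigenvalue, the hypotheses of \cite[Prop.~4.2.17]{BCN} are satisfied, and that proposition yields that $\Gamma_{3}$ is a strongly regular graph; I would write its parameters as $(n,k,\lambda,\mu)$. Primitivity is what I would lean on here: it excludes the antipodal and bipartite degeneracies in which $\Gamma_{3}$ fails to be a genuine (connected, non-complete) strongly regular graph, so that $\lambda$ and $\mu$ are meaningful in the usual sense.

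Finally I would locate $C$ inside $\Gamma_{3}$. By Observation \ref{Observation1} any two distinct elements of $C$ lie at distance at least $3$ in $\Gamma$, and as the diameter equals $3$ they lie at distance exactly $3$; hence they are adjacent in $\Gamma_{3}$, so $C$ is a clique of size $|C|$ there. Choosing two adjacent vertices $x,y\in C$, the other $|C|-2$ elements of $C$ are all common neighbours of $x$ and $y$ in $\Gamma_{3}$, and since adjacent vertices of a strongly regular graph with parameters $(n,k,\lambda,\mu)$ have exactly $\lambda$ common neighbours, we conclude $\lambda\geq|C|-2$.

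The statement is essentially a synthesis of facts already established, so I do not expect any lengthy computation. The only point I would handle with care --- and hence the main obstacle --- is the second step: confirming that eigenvalue $-1$ is precisely the condition under which \cite[Prop.~4.2.17]{BCN} certifies strong regularity of $\Gamma_{3}$, and checking that the resulting strongly regular graph is non-degenerate so that the clique bound $\lambda\geq|C|-2$ carries real content. Once that is secured, the clique argument in the third step is immediate.
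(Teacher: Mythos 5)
Your proposal is correct and follows essentially the same route as the paper: Observation \ref{Observation2} gives the eigenvalue $-1$, \cite[Prop.~4.2.17]{BCN} then makes the distance-$3$ graph strongly regular, and Observation \ref{Observation1} (together with the diameter being $3$) turns the code $C$ into a clique of the distance-$3$ graph, from which $\lambda \geq |C|-2$ follows by counting common neighbours of two adjacent code vertices. Your extra care about primitivity ruling out degenerate cases is a sensible elaboration of what the paper leaves implicit, but it is not a different argument.
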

Among primitive distance-regular graphs with diameter $3$ and small number of vertices, the first putative example is the Odd graph with $35$ vertices and intersection array $\{4,3,3;1,1,2\}$ since it has eigenvalue $-1$ (cf. \cite[Chap.~$14$]{BCN} and Obs. \ref{Observation2}). This graph admits a perfect $1$-code with $7$ vertices (cf. \cite[Fig.~1]{JV}). The second example is the Sylvester graph with $36$ vertices and intersection array $\{5,4,2;1,1,4\}$. Indeed every $6$-clique in the distance-$3$ graph of the Sylvester graph corresponds to a perfect $1$-code in this graph (see also \cite[Sec.~$3$]{JV2}).
\begin{prop} \label{Sylvester}
The Sylvester graph admits a perfect $1$-code.
\end{prop}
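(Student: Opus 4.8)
The plan is to prove both directions of the correspondence announced just before the statement: that a perfect $1$-code in the Sylvester graph is precisely a $6$-clique in its distance-$3$ graph $\Gamma_{3}$, and then to exhibit such a clique. First I would record the size. Since the Sylvester graph is $5$-regular on $36$ vertices, Equation \ref{equation} forces any perfect $1$-code $C$ to have $|C|=36/6=6$, and by Observation \ref{Observation1} the elements of $C$ are pairwise at distance $3$, so $C$ is a $6$-clique of $\Gamma_{3}$.

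The converse is the short combinatorial heart of the argument. Suppose $C$ is \emph{any} set of $6$ vertices that are pairwise at distance $3$. Then $C$ is independent, and no two of its vertices can have a common neighbour, since a common neighbour would place them at distance at most $2$. Hence the closed neighbourhoods $\overline{\Gamma_{1}(c)}$, for $c\in C$, are pairwise disjoint, and each has size $k+1=6$. As $6\cdot 6=36=|V|$, these closed balls partition the vertex set, so $C$ is a perfect $1$-code. Thus it suffices to produce a single $6$-clique in $\Gamma_{3}$.

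To locate one I would first identify $\Gamma_{3}$. By Observation \ref{Observation3} (using that $-1$ is an eigenvalue), $\Gamma_{3}$ is strongly regular; a short computation with the standard sequence attached to the intersection array $\{5,4,2;1,1,4\}$, which gives the eigenvalue of the distance-$3$ matrix on each eigenspace, yields the spectrum $10^{1},4^{10},(-2)^{25}$. Hence $\Gamma_{3}$ has parameters $(36,10,4,2)$ and least eigenvalue $-2$. Since $\mu=2>0$, the graph $\Gamma_{3}$ is connected and non-complete, so it is a distance-regular graph of diameter $2$ with least eigenvalue $-2$ and Theorem \ref{thm:drg-2} applies. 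Running through the diameter-$2$ list, the parameters $(36,10,4,2)$ single out the lattice graph $L_{2}(6)$ (the $6\times 6$ rook's graph), as the cocktail-party, triangular, Petersen, Clebsch, Shrikhande, Schl\"{a}fli and Chang graphs all have different parameters. In $L_{2}(6)$ every row and every column is a $6$-clique, which provides the required clique and, by the previous paragraph, a perfect $1$-code.

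I expect the only real obstacle to be this identification step: verifying that $\Gamma_{3}$ is strongly regular with the stated parameters and then pinning it down as $L_{2}(6)$ through the classification. One can add a sanity check: the Hoffman/Delsarte bound shows that a clique in a strongly regular graph with least eigenvalue $-2$ has at most $1+10/2=6$ vertices, so the cliques we exhibit are exactly the maximal (Delsarte) cliques, and there are no larger ones to worry about. Alternatively, the existence of a $6$-clique could simply be confirmed with {\sf GAP}, consistent with the computations used elsewhere in the paper.
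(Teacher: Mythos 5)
Your argument is correct, and its core reduction---that perfect $1$-codes in the Sylvester graph are exactly the $6$-cliques of its distance-$3$ graph $\Gamma_{3}$---is precisely what the paper asserts; where you genuinely differ is in how the existence of such a clique is settled. The paper gives no internal existence argument: it states the correspondence and defers to \cite[Sec.~3]{JV2} for the codes themselves. You instead close the gap inside the paper's own framework. Your spectrum computation checks out: on the eigenspaces of the Sylvester graph for the eigenvalues $5,2,-1,-3$ (multiplicities $1,16,10,9$), the distance-$3$ matrix $\frac{1}{4}(A^{3}-2A^{2}-9A+10I)$ takes the values $10,-2,4,-2$, giving the spectrum $10^{1},4^{10},(-2)^{25}$, so $\Gamma_{3}$ is a connected strongly regular graph with parameters $(36,10,4,2)$ and least eigenvalue $-2$, and the diameter-$2$ case of Theorem \ref{thm:drg-2} identifies it as the lattice graph $L_{2}(6)$, since no other graph on that list has $36$ vertices and valency $10$; its rows and columns then supply the $6$-cliques. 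What each approach buys: the paper's version is a one-line attribution, while yours is self-contained, reuses a classification theorem the paper already quotes rather than an external reference, and as a bonus determines \emph{all} perfect $1$-codes (the twelve rows and columns, since every clique of size at least $3$ in $L_{2}(6)$ lies within a single row or column, and your Delsarte-bound remark shows $6$ is the maximum clique size). One presentational caution, not a gap: Observation \ref{Observation3} is phrased conditionally on the graph already admitting a perfect $1$-code, so to avoid any appearance of circularity you should invoke the underlying fact \cite[Prop.~4.2.17]{BCN} directly (a diameter-$3$ distance-regular graph with eigenvalue $-1$ has strongly regular distance-$3$ graph), or simply observe that your computation exhibits a connected regular graph with three distinct eigenvalues, which is automatically strongly regular; your parenthetical remark that only the eigenvalue $-1$ is being used already carries this meaning.
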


Let $\Gamma$ be a distance-regular graph with diameter $4$. If $\Gamma$ is antipodal, then it doesn't admit a perfect $1$-code (cf. Cor. \ref{antipodaldrg45}). Now suppose that $\Gamma$ is bipartite with degree $k$. Then it has eigenvalues $\{\pm k,0,\pm1\}$ because it has eigenvalue $-1$ by Observation \ref{Observation2}. The following lemma shows there is no such graph.
\begin{lem}
There is no bipartite distance-regular graph with diameter $4$ and eigenvalues $\{\pm k,0,\pm1\}$.
\end{lem}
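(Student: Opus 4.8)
The plan is to exploit the fact that a distance-regular graph of diameter $4$ has exactly five distinct eigenvalues, so if these are prescribed to be $\pm k,\pm 1,0$, then the $5\times 5$ tridiagonal intersection matrix $L_{1}$ has characteristic polynomial equal to
\[
x(x^{2}-1)(x^{2}-k^{2})=x^{5}-(k^{2}+1)x^{3}+k^{2}x .
\]
Matching this against the characteristic polynomial computed directly from the intersection array will force an algebraic relation among the $c_{i}$ that turns out to be unsatisfiable.

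First I would record the shape of the intersection array. Since the graph is bipartite we have $a_{i}=0$ for all $i$, hence $b_{i}+c_{i}=k$; together with $c_{1}=1$ and $c_{4}=k$ this gives the array $\{k,k-1,k-c_{2},k-c_{3};1,c_{2},c_{3},k\}$ and the intersection matrix
\[
L_{1}=\begin{pmatrix}
0 & k & 0 & 0 & 0\\
1 & 0 & k-1 & 0 & 0\\
0 & c_{2} & 0 & k-c_{2} & 0\\
0 & 0 & c_{3} & 0 & k-c_{3}\\
0 & 0 & 0 & k & 0
\end{pmatrix}.
\]
Next I would compute $\det(xI-L_{1})$ using the standard three-term recurrence for tridiagonal determinants, $D_{i}=xD_{i-1}-b_{i-2}c_{i-1}D_{i-2}$ with $D_{0}=1$ and $D_{1}=x$. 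This is routine and yields a degree-$5$ polynomial whose constant and even-degree terms vanish automatically, reflecting the bipartite symmetry and the eigenvalue $0$.

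The decisive step is to compare the coefficient of $x^{3}$. Writing out $D_{5}$ and equating its $x^{3}$-coefficient with $-(k^{2}+1)$ collapses, after cancellation, to the single relation $c_{2}(c_{3}-k+1)=k-1$. Since $k\geq 2$ (the five eigenvalues are distinct, so $k>1$), the right-hand side is positive, forcing $c_{3}>k-1$, i.e.\ $c_{3}\geq k$. But $b_{3}\geq 1$ because $3<d=4$, so $c_{3}=k-b_{3}\leq k-1$, a contradiction; hence no such graph exists. I expect the only real bookkeeping to be the tridiagonal determinant expansion: the spectral constraint does all the work once the $x^{3}$-coefficient is isolated, and matching the remaining coefficients is not even needed.
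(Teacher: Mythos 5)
Your proof is correct, and its core is the same as the paper's: both extract from the tridiagonal intersection matrix the single spectral constraint $(c_{2}+1)k-c_{2}(c_{3}+1)=1$, equivalently your $c_{2}(c_{3}-k+1)=k-1$. The difference is in how each argument reaches a contradiction from this equation. The paper first discards the $8$-cycle to ensure $k>2$, then invokes the divisibility condition $c_{2}\mid k$ (citing \cite[Lem.~1.7.2]{BCN}); since the equation forces $c_{2}\mid k-1$ as well, this gives $c_{2}=1$, hence $c_{3}=2k-2\leq k$ and so $k\leq 2$, a contradiction. You instead finish by pure positivity and integrality: $k-1>0$ forces $c_{3}-k+1>0$, i.e.\ $c_{3}\geq k$, while bipartiteness and $b_{3}\geq 1$ (valid since $3<d$) give $c_{3}=k-b_{3}\leq k-1$. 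Your finish is more self-contained -- it needs no external divisibility lemma and handles $k=2$ uniformly, so the separate exclusion of $C_{8}$ becomes unnecessary -- and your derivation of the key equation via the explicit characteristic polynomial is also cleaner than the paper's, which asserts that $(c_{2}+1)k-c_{2}(c_{3}+1)$ \emph{is} the second largest eigenvalue, when in fact it is its square (harmless here, since setting either equal to $1$ gives the same equation, but your route avoids the imprecision).
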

\begin{proof}
Let $\Gamma$ be a bipartite distance-regular graph with diameter $4$ and eigenvalues $\{\pm k,0,\pm1\}$. We note that this graph can not be the cycle graph on $8$ vertices and therefore $k>2$. On the other hand, if this graph has intersection array $\{k,b_{1},b_{2},b_{3};1,c_{2},c_{3},c_{4}\}$, then by considering the intersection matrix of this graph, the second largest eigenvalue must be $(c_{2}+1)k-c_{2}(c_{3}+1)$ and therefore $(c_{2}+1)k-c_{2}(c_{3}+1)=1$. On the other hand, $c_{2}$ must divide $k$ (cf. \cite[Lem.~$1.7.2$]{BCN}) which implies that $c_{2}=1$ and $c_{3}=2k-2$. Therefore $k \leq 2$ since $c_{3} \leq k$, a contradiction, and this completes the proof.
\end{proof}
Therefore we can conclude the following proposition.
\begin{prop}
A bipartite distance-regular graph with diameter $4$ doesn't admit a perfect $1$-code.
\end{prop}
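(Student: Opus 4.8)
The plan is to obtain the proposition as a short consequence of the non-existence Lemma just proved, by showing that the perfect-$1$-code hypothesis forces the spectrum of such a graph into exactly the forbidden shape. First I would record the general spectral facts: a distance-regular graph of diameter $4$ has precisely five distinct eigenvalues, and since the graph is bipartite its spectrum is symmetric about $0$. As five is odd, the central eigenvalue must be $0$, so the spectrum necessarily has the form $\{\pm k, \pm\theta, 0\}$, where $k$ is the valency and $\theta$ is the remaining positive eigenvalue.

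Next I would bring in the perfect-$1$-code assumption through Observation \ref{Observation2}: any connected regular graph admitting a perfect $1$-code must have $-1$ as an eigenvalue. By the symmetry of the bipartite spectrum, $+1$ is then an eigenvalue as well. Since the graph is not the $8$-cycle we have $k>2$, so $-1$ cannot coincide with $-k$, with $0$, or with $\pm k$; the only slot available is $-1 = -\theta$, which pins $\theta = 1$. Hence the spectrum is forced to be exactly $\{\pm k, 0, \pm 1\}$.

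Finally I would invoke the preceding Lemma, which states that no bipartite distance-regular graph of diameter $4$ can have the spectrum $\{\pm k, 0, \pm 1\}$. This immediately contradicts the spectral shape derived above, so no bipartite distance-regular graph of diameter $4$ can admit a perfect $1$-code, which is the claim.

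The genuine difficulty of this result is not in the proposition but in the Lemma it rests upon: that is where the intersection-array analysis lives (extracting the second-largest eigenvalue $(c_2+1)k - c_2(c_3+1) = 1$ from the intersection matrix and then using the divisibility $c_2 \mid k$ to squeeze the parameters into an impossible range). Granting that Lemma, the proposition itself is essentially bookkeeping; the only point I would take care to verify is that the perfect-$1$-code hypothesis really determines the full spectrum up to the single value $\theta$, so that the eigenvalue $-1$ leaves no freedom and genuinely produces the forbidden spectrum $\{\pm k, 0, \pm 1\}$ rather than some larger family.
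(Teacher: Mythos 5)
Your proposal is correct and follows essentially the same route as the paper: the paper likewise uses Observation \ref{Observation2} together with bipartiteness and diameter $4$ to force the spectrum $\{\pm k,0,\pm1\}$, and then invokes the preceding lemma to rule such a graph out (the $8$-cycle case being dismissed exactly as you do, since its spectrum lacks $-1$). Your only addition is spelling out the spectral bookkeeping (five distinct eigenvalues, symmetry about $0$) that the paper leaves implicit.
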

Among primitive distance-regular graphs with diameter $4$ and small number of vertices, the first putative example is the Coxeter graph with $28$ vertices and intersection array $\{3,2,2,1;1,1,1,2\}$ since it has eigenvalue $-1$ (cf. \cite[Chap.~$14$]{BCN} and Obs. \ref{Observation2}).
This graph partitions into three $7$-gons and a $7$-coclique (cf. \cite{Coxeter}). It turns out that the $7$-coclique is indeed a perfect $1$-code. Therefore we can conclude the following proposition.
\begin{prop} \label{Coxeter}
The Coxeter graph admits a perfect $1$-code.
\end{prop}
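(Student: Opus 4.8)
The plan is to take the $7$-coclique from the decomposition mentioned just before the statement and verify directly that it is a perfect $1$-code, using the known partition of the Coxeter graph into three $7$-gons and a $7$-coclique (cf. \cite{Coxeter}). First I would note that since the Coxeter graph is $3$-regular on $28$ vertices, Equation \ref{equation} forces any perfect $1$-code to have exactly $28/(3+1)=7$ vertices, so the coclique already has the correct size. By the definition of a perfect $1$-code it then suffices to check two things: that this coclique $D$ is an independent set, which holds trivially, and that every vertex outside $D$ has a \emph{unique} neighbour in $D$.

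For the second condition I would argue by counting edges. The graph has $28\cdot 3/2 = 42$ edges. Each of the three $7$-gons contributes $7$ internal edges, accounting for $21$ of them; and since $D$ is an independent set of $7$ vertices each of degree $3$, it is incident to exactly $7\cdot 3 = 21$ edges, all of which run to the $7$-gons. As $21+21 = 42$ exhausts every edge of the graph, there can be no edge joining two distinct $7$-gons. Now each vertex of a $7$-gon has degree $2$ inside its own cycle and degree $3$ overall, hence exactly one edge leaving its cycle; since no such edge connects two $7$-gons, that single external edge must land in $D$. Thus every one of the $21$ vertices outside $D$ has exactly one neighbour in $D$ (and, as a consistency check, summing these gives the $21$ edges incident to $D$). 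This is precisely the defining property, so $D$ is a perfect $1$-code.

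The only external ingredient is the decomposition of the Coxeter graph into three $7$-gons and a $7$-coclique, which I would cite from \cite{Coxeter} rather than reprove; granting it, the remainder is the short edge-count above and poses no genuine difficulty, so there is no real obstacle here. As a final remark, this argument recovers and strengthens the necessary condition of Observation \ref{Observation2}: the mere presence of the eigenvalue $-1$ in the Coxeter graph is upgraded to the explicit existence of a perfect $1$-code.
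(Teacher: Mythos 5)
Your proposal is correct and takes essentially the same approach as the paper: the paper likewise cites the partition of the Coxeter graph into three $7$-gons and a $7$-coclique from \cite{Coxeter} and simply asserts (``it turns out'') that the $7$-coclique is a perfect $1$-code. The only difference is that you fill in the verification the paper omits, via a clean edge count showing all $42$ edges are either cycle edges or edges from the coclique to the cycles, so each cycle vertex has exactly one neighbour in the coclique.
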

\section{Distance-regular graphs with small valency}
All known distance-regular graphs with small valency at most $4$, the distance-regular graphs with known putative intersection arrays for valency $5$, and all distance-regular graphs with girth $3$ and valency $6$ or $7$  are listed in \cite{VJ}. We give an overview of all possible intersection arrays and corresponding graphs, and indicate which of these admit a perfect $1$-code. Note that for each intersection array in Table \ref{tabledrgvalency3} there is a unique distance-regular graph. Moreover, for each intersection array in Table \ref{tabledrgvalency4} there is a unique distance-regular graph, except possibly for the last array, which corresponds to the incidence graph of a generalized hexagon of order $(3,3)$. Furthermore, in Table \ref{tabledrgvalency5}, all known putative intersection arrays for distance-regular graphs with valency $5$ are listed.  All of the graphs in the table are unique, given their intersection arrays, except possibly the incidence graph of a generalized hexagon of order $(4,4)$ (the last case). All distance-regular graphs with valency at most $7$ and girth $3$ (i.e., with triangles) are listed in Table \ref{tabledrgvalency6} besides the ones with valency at most $5$ that we have encountered in the previous tables. For each of the intersection arrays $\{6,3;1,2\}$ and $\{6,4,4;1,1,3\}$, there are exactly two distance-regular graphs (as mentioned in the table). By $n$, $d$, and $g$, we denote the number of vertices, diameter, and girth, respectively. We note that in the reference column, only one reason is stated.
\subsection{The point graphs of the generalized hexagons of order $(2,2)$} \label{GH(2,2)}
Up to isomorphism there are exactly two generalized hexagons of order $(2,2)$. Each of them is the dual of the other (cf. \cite[Theorem~1]{CT}). Their point graphs (collinearity graphs) give rise to two distance-regular graphs with intersection array $\{6,4,4;1,1,3\}$ and $63$ vertices. We can distinguish the two graphs by whether the graph induced on the vertices at distance $3$ from a fixed vertex is connected or not. These two non-isomorphic distance-regular graphs have been constructed in {\sf GAP} \cite{GAP} with {\sf Grape} \cite{Sgrape} package as {\sf Graph $1$} and {\sf Graph $2$} in \cite{drgorg}. Indeed the graph induced on the vertices at distance $3$ from a fixed vertex in the {\sf Graph $2$} is connected whereas in the {\sf Graph $1$} is disconnected. Let $\Gamma$ be one of such distance-regular graphs which admits a perfect $1$-code $C$. Then $|C|=9$ by Equation \ref{equation}. Moreover, each pair of vertices in $C$ is at distance $3$ by Observation \ref{Observation1} since $\Gamma$ has diameter $3$. Therefore a perfect $1$-code $C$ can be viewed as a clique in the distance-$3$ graph of $\Gamma$. By using {\sf GAP} \cite{GAP}, it turns out that the {\sf Graph $1$} of \cite{drgorg} admits perfect $1$-codes%
\footnote{The {\sf Graph $1$} of \cite{drgorg} with vertex set $\{1,2,\ldots,63\}$ admits $\{3,4,5,7,35,37,42,50,63\}$ and $\{1,2,22,24,30,33,41,61,63\}$ as perfect $1$-codes.} but the {\sf Graph $2$} of \cite{drgorg} doesn't admit a perfect $1$-code since there is no complete subgraph of size $9$ in its distance-$3$ graph. This shows that we can not deduce that a distance-regular graph admits a perfect $1$-code from its spectrum.

\begin{center}
\begin{table}[h]
\begin{tabular}{ l r r c l  c l}
  \hline
  Intersection array & $n$ & $d$ & $g$ & Name & Perfect $1$-code & Reference\\
  \hline
  % after \\: \hline or \cline{col1-col2} \cline{col3-col4} ...
  \{3;1\}& 4 & 1 & 3&$\operatorname{K_{4}}$ & Yes & Rem.~\ref{completegraph}\\
  \{3,2;1,3\} & 6 & 2 &4& $\operatorname{K_{3,3}}$ & No & Eq.~\ref{equation}\\
  \{3,2,1;1,2,3\} & 8 & 3 &4& $\operatorname{K_{3,3}^*}$ & Yes & Prop.~\ref{Bipartitedrg3}\\
  \{3,2;1,1\} & 10 & 2 &5& $\operatorname{Petersen}$ & No & Eq.~\ref{equation}\\
  \{3,2,2;1,1,3\} & 14 & 3 &6& $\operatorname{Heawood}$ & No & Eq.~\ref{equation}\\
  \{3,2,2,1;1,1,2,3\} & 18 & 4 &6& $\operatorname{Pappus}$ & No & Eq.~\ref{equation}\\
  \{3,2,2,1,1;1,1,2,2,3\} & 20 & 5 &6& $\operatorname{Desargues}$ & No & Cor.~\ref{antipodaldrg45}\\
  \{3,2,1,1,1;1,1,1,2,3\} & 20 & 5 &5& $\operatorname{Dodecahedron}$ & No & Cor.~\ref{antipodaldrg45}\\
  \{3,2,2,1;1,1,1,2\}& 28 & 4 &7& $\operatorname{Coxeter}$ & Yes & Prop.~\ref{Coxeter} \\
  \{3,2,2,2;1,1,1,3\} & 30 & 4 &8& $\operatorname{Tutte's}$ $\operatorname{8-cage}$ & No & Eq.~\ref{equation}\\
  \{3,2,2,2,2,1,1,1;& 90 & 8 & 10& $\operatorname{Foster}$ & No & Eq.~\ref{equation}\\
   \hspace{.5cm}1,1,1,1,2,2,2,3\}  & && && &\\
  \{3,2,2,2,1,1,1; & 102 & 7 &9& $\operatorname{Biggs-Smith}$ & No & Eq.~\ref{equation}\\
  \hspace{.5cm}1,1,1,1,1,1,3\}  && & & &&\\
  \{3,2,2,2,2,2; & 126 & 6 &12& $\operatorname{Tutte's}$ $\operatorname{12-cage}$ & No & Eq.~\ref{equation}\\
   \hspace{.5cm}1,1,1,1,1,3\}  & && & &&\\
  \hline
\end{tabular}
\caption{Distance-regular graphs with valency $3$}\label{tabledrgvalency3}
\end{table}
\end{center}
\begin{table}[h]
\begin{center}
\begin{tabular}{ l r r c l c l}
\hline
   Intersection array & $n$ & $d$ & $g$& Name & Perfect $1$-code & Reference\\
  \hline
  % after \\: \hline or \cline{col1-col2} \cline{col3-col4} ...
  \{4;1\} & 5 & 1 &3& $\operatorname{K_{5}}$ & Yes & Rem.~\ref{completegraph}\\
  \{4,1;1,4\} &  6 & 2 &3& $\operatorname{K_{2,2,2}}$ & No & Eq.~\ref{equation}\\
  \{4,3;1,4\} &  8 & 2 &4& $\operatorname{K_{4,4}}$ & No & Eq.~\ref{equation}\\
  \{4,2;1,2\} &  9 & 2 &3& $\operatorname{Paley}$ $\operatorname{graph}$ $\operatorname{P(9)}$ & No & Eq.~\ref{equation}\\
  \{4,3,1;1,3,4\} &  10 & 3 &4& $\operatorname{K^{*}_{5,5}}$ & Yes & Prop.~\ref{Bipartitedrg3}\\
  \{4,3,2;1,2,4\} &  14 & 3 &4& $\operatorname{IG(7,4,2)}$ & No & Eq.~\ref{equation}\\
  \{4,2,1;1,1,4\} &  15 & 3 &3& $\operatorname{L(Petersen)}$ & Yes & Cor.~\ref{antipodaldrg3}\\
  \{4,3,2,1;1,2,3,4\} &  16 & 4 &4& $\operatorname{Q_{4}}$ & No & Eq.~\ref{equation}\\
  \{4,2,2;1,1,2\} &  21 & 3 &3& $\operatorname{L(Heawood)}$ & No & Eq.~\ref{equation}\\
   \{4,3,3;1,1,4\} &  26 & 3 &6& $\operatorname{IG(13,4,1)}$ & No & Eq.~\ref{equation}\\
   \{4,3,3,1;1,1,3,4\} &  32 & 4 &6& $\operatorname{IG(A(2,4)\setminus pc)}$ & No & Eq.~\ref{equation}\\
   \{4,3,3;1,1,2\} &  35 & 3 &6& $\operatorname{O_4}$ & Yes & \cite[Fig.~1]{JV}\\
   \{4,2,2,2;1,1,1,2\} &  45 & 4 &3& $\operatorname{L(Tutte's}$ $\operatorname{8-cage)}$
   & Yes & Prop.~\ref{LGQ}\\%\cite[Prop.~3.4]{ADJ}\\
   \{4,3,3,2,2,1,1; &  70 & 7 &6& $\operatorname{DO_4}$ & Yes & Cor.~\ref{DoubleOdd}\\
    \hspace{.5cm}1,1,2,2,3,3,4\} & & & && &\\
   \{4,3,3,3;1,1,1,4\} &  80 & 4 &8& $\operatorname{IG(GQ(3,3))}$ & No & Obs.~\ref{Observation2}\\
   \{4,2,2,2,2,2; &  189 & 6 &3& $\operatorname{L(Tutte's}$ $\operatorname{12-cage})$
   & No & Eq.~\ref{equation}\\
     \hspace{.5cm}1,1,1,1,1,2\} & & & && &\\
   \{4,3,3,3,3,3; &  728 & 6 &12& $\operatorname{IG(GH(3,3))}$ & No & Eq.~\ref{equation}\\
     \hspace{.5cm}1,1,1,1,1,4\}  && & && &\\
  \hline
\end{tabular}
 \caption{Distance-regular graphs with valency $4$}\label{tabledrgvalency4}
\end{center}
\end{table}
\begin{table}[h]
\begin{center}
\begin{tabular}{ l r r c l c l}
  \hline
   Intersection array & $n$ & $d$ &$g$& Name & Perfect $1$-code & Reference\\
  \hline
  % after \\: \hline or \cline{col1-col2} \cline{col3-col4} ...
 \{5;1\} & 6 & 1 &3& $\operatorname{K_{6}}$ & Yes & Rem.~\ref{completegraph}\\
 \{5,4;1,5\} &  10 & 2 &4& $\operatorname{K_{5,5}}$ & No & Eq.~\ref{equation}\\
 \{5,2,1;1,2,5\} &  12 & 3 &3& $\operatorname{Icosahedron}$ & Yes & Cor.~\ref{antipodaldrg3}\\
 \{5,4,1;1,4,5\} &  12 & 3 &4 &$\operatorname{K^{*}_{6,6}}$ & Yes & Prop.~\ref{Bipartitedrg3}\\
 \{5,4;1,2\} &  16 & 2 &4& $\operatorname{Folded}$ $\operatorname{5-cube}$ & No & Eq.~\ref{equation}\\
 \{5,4,3;1,2,5\} &  22 & 3 &4&$\operatorname{IG(11,5,2)}$ & No & Eq.~\ref{equation}\\
 \{5,4,3,2,1;1,2,3,4,5\} &  32 & 5 &4& $\operatorname{Q_{5}}$ & No & Eq.~\ref{equation}\\
 \{5,4,1,1;1,1,4,5\} &  32 & 4 &5& $\operatorname{Armanios-Wells}$ & No & Eq.~\ref{equation}\\
 \{5,4,2;1,1,4\} &  36 & 3 &5& $\operatorname{Sylvester}$ & Yes & Prop.~\ref{Sylvester}\\
  \{5,4,4;1,1,5\} &  42 & 3 &6& $\operatorname{IG(21,5,1)}$ & No & Obs.~\ref{Observation2}\\
  \{5,4,4,1;1,1,4,5\} &  50 & 4 &6& $\operatorname{IG(A(2,5)\setminus pc)}$ & No & Eq.~\ref{equation}\\
  \{5,4,4,3;1,1,2,2\} &  126 & 4 &6& $\operatorname{O_{5}}$ & No & Obs.~\ref{Observation2}\\
  \{5,4,4,4;1,1,1,5\} &  170 & 4 &8& $\operatorname{IG(GQ(4,4))}$ & No & Eq.~\ref{equation}\\
  \{5,4,4,3,3,2,2,1,1; &  252 & 9 &6& $\operatorname{DO_{5}}$ & No & Cor.~\ref{DoubleOdd}\\
   \hspace{.5cm}1,1,2,2,3,3,4,4,5\}  && & && &\\
  \{5,4,4,4,4,4; &  2730 & 6 &12& $\operatorname{IG(GH(4,4))}$ & No & Obs.~\ref{Observation2}\\
  \hspace{.5cm}1,1,1,1,1,5\}  && & && &\\
  \hline
\end{tabular}
 \caption{Distance-regular graphs with valency $5$}\label{tabledrgvalency5}
\end{center}
\end{table}
\begin{table}[h]
\begin{center}
\begin{tabular}{ l r r c l c l}
  \hline
   Intersection array & $n$ & $d$ &$g$& Name & Perfect $1$-code & Reference\\
  \hline
  % after \\: \hline or \cline{col1-col2} \cline{col3-col4} ...
 \{6;1\} & 7 & 1 &3& $\operatorname{K_{7}}$ & Yes & Rem.~\ref{completegraph}\\
 \{6,1;1,6\} &  8 & 2 &3& $\operatorname{K_{2,2,2,2}}$ & No & Eq.~\ref{equation}\\
 \{6,2;1,6\} &  9 & 2 &3& $\operatorname{K_{3,3,3}}$ & No & Eq.~\ref{equation}\\
 \{6,2;1,4\} &  10 & 2 &3& $\operatorname{T(5)}$ & No & Eq.~\ref{equation}\\
 \{6,3;1,3\} &  13 & 2 &3& $\operatorname{P(13)}$ & No & Eq.~\ref{equation}\\
 \{6,4;1,3\} &  15 & 2 &3& $\operatorname{\overline{T(6)}\sim GQ(2,2)}$ & No & Eq.~\ref{equation}\\
 \{6,3;1,2\} &  16 & 2 &3& $\operatorname{L_{2}(4)}$, $\operatorname{Shrikhande}$ & No & Eq.~\ref{equation}\\
 \{6,4,2;1,2,3\} &  27 & 3 &3& $\operatorname{H(3,3)}$ & No & Eq.~\ref{equation}\\
 \{6,4,2,1;1,1,4,6\} &  45 & 4 &3& $\operatorname{halved}$ $\operatorname{Foster}$ & No & Eq.~\ref{equation}\\
 \{6,3,3;1,1,2\} &  52 & 3 &3& $\operatorname{L(IG(13,4,1)}$ & No & Eq.~\ref{equation}\\
 \{6,4,4;1,1,3\} &  63 & 4 &3& $\operatorname{GH(2,2)}$ $\operatorname{(Graph~1)}$& Yes & Sec.~\ref{GH(2,2)}\\
 \{6,4,4;1,1,3\} &  63 & 4 &3& $\operatorname{GH(2,2)}$ $\operatorname{(Graph~2)}$& No & Sec.~\ref{GH(2,2)}\\
 \{6,3,3,3;1,1,1,2\} &  160 & 4 &3& $\operatorname{L(IG(GQ(3,3)))}$ & No & Prop.~\ref{LGQ}\\
 \{6,3,3,3,3,3;1,1,1,1,1,2\} &  1456 & 6 &3& $\operatorname{L(IG(GH(3,3)))}$ & No & Prop.~\ref{LGH}\\
 \hline
  \{7;1\} & 8 & 1 &3& $\operatorname{K_{8}}$ & Yes & Rem.~\ref{completegraph}\\
 \{7,4,1;1,2,7\} &  24 & 3 &3& $\operatorname{Klein}$ & Yes & Cor.~\ref{antipodaldrg3}\\
 \hline
\end{tabular}
 \caption{Distance-regular graphs with girth $3$ and valency $6$ or $7$}\label{tabledrgvalency6}
\end{center}
\end{table}
% ----------------------------------------------------------------
\section*{Acknowledgements}

\noindent The author would like to thank the anonymous referee for his/her invaluable comments which led to fixing some errors in the proof of Proposition \ref{LGH} and the statement of Proposition \ref{Sylvester}, and improving the presentation of this paper. The author is grateful to the Research Council of Shahid Chamran University of Ahvaz for financial support (SCU.MM99.29248).
% ----------------------------------------------------------------

\end{document}